\numberwithin{equation}{section}
\begin{document}

\title{Infinite dimensional Ornstein-Uhlenbeck processes with unbounded diffusion 
-- Approximation, quadratic variation, and It\^o formula} 
\par
\author{John Karlsson and J\"org-Uwe L\"obus
\\ Matematiska institutionen \\ 
Link\"opings universitet \\ 
SE-581 83 Link\"oping \\ 
Sverige 
}
\date{}
\maketitle
{\footnotesize
\noindent
\begin{quote}
{\bf Abstract}
The paper studies a class of Ornstein-Uhlenbeck processes on the classical Wiener space. These  
processes are associated with a diffusion type Dirichlet form whose corresponding diffusion 
operator is unbounded in the Cameron-Martin space. It is shown that the distributions of certain 
finite dimensional Ornstein-Uhlenbeck processes converge weakly to the distribution of such an 
infinite dimensional Ornstein-Uhlenbeck process. For the infinite dimensional processes, the 
ordinary scalar quadratic variation is calculated. Moreover, relative to the stochastic calculus 
via regularization, the scalar as well as the tensor quadratic variation are derived. A 
related It\^o formula is presented. 

\noindent

{\bf AMS subject classification (2010)} primary 60J60 secondary 60G15

\noindent
{\bf Keywords} Infinite dimensional Ornstein-Uhlenbeck process, quadratic variation, 
It\^o formula, weak approximation.
\end{quote}
}

\section{Introduction} 
\label{intro:section:1} 

Over the past two decades, infinite dimensional diffusion processes have become a central 
focus of stochastic analysis. One important class of infinite dimensional stochastic processes 
is the set of Ornstein-Uhlenbeck type processes. 

We wish to investigate a fairly accessible representative of the relatively abstract class 
of infinite dimensional stochastic processes with unbounded diffusion introduced in 
\cite{Lobus2004}, \cite{WangWu2008}, \cite{ChenWu2014}, and \cite{KarlssonLobus2014}. 
Existence and representation of standard elements of the stochastic calculus such as 
quadratic variation and It\^o formula may convince that these processes fit in the general 
concept of infinite dimensional stochastic processes. We would also like to emphasize that 
the just mentioned references deal with a class of stochastic processes taking values in 
certain path spaces, i.e. in Banach spaces.  

As a result of this paper, we have scalar as well as tensor quadratic variation and 
the corresponding It\^o formula available for a class of infinite dimensional 
Ornstein-Uhlenbeck processes 
with unbounded diffusion. For this we have used the recently developed stochastic 
calculus via regularization, see \cite{RussoVallois2007}, \cite{Girolami2014}, and 
\cite{GirolamiRusso2014}. Even if the diffusion of such infinite dimensional 
Ornstein-Uhlenbeck processes is governed by unbounded operators these processes 
can be weakly approximated by finite dimensional Ornstein-Uhlenbeck processes.
\medskip 

Let $C_0([0,1];\R^d)$ denote the set of all continuous $\R^d$-valued functions $\gamma$ on 
$[0,1]$ with $\gamma(0)=0$ and let $C_0([0,1];\R^d)$ be endowed with the sup-norm. Introduce 
the following set of cylindrical functions over $\Omega\equiv C_0([0,1];\R^d)$
\begin{align*}
	Y:=&\left\{F(\gamma)=f\left(\gamma(s_1),\dots,\gamma(s_k)\right), \gamma \in \Omega:  
	0<s_1<\dots<s_k=1,\ \vphantom{\dot{f}}\right.\\
	&\qquad\left. s_1,\dots,s_k\in \left\{\textstyle{\frac{l}{2^n}}:l \in \{1,\dots,2^n\} 
	\right\},\ n\in \N,\  f\in C^{\infty}_p(({\RE^d})^k), k\in \N\vphantom{\dot{f}}\right\}
\end{align*} 
where $C^{\infty}_p(({\RE^d})^k)$ denotes the set of all infinitely differentiable real 
functions $f$ on $({\RE^d})^k$ such that all its partial derivatives are of at most polynomial growth. 
For $F$ as in the above definition of $Y$ the gradient operator $D$ is given by 
\begin{align*}
	D_sF(\gamma)=\sum_{i=1}^{k} (s_{i}\wedge s) (\nabla_{s_{i}}f)(\gamma), \quad s\in [0,1],\ 
	\gamma\in\Omega, 
\end{align*}
where $(\nabla_{s_{i}}f)(\gamma)=(\nabla_{s_{i}}f)(\gamma(s_1),\dots,\gamma(s_k))$ denotes the 
gradient of the function $f$ relative to the $i$th variable while holding the other variables 
fixed. Let $\CM$ denote the Cameron-Martin space, i.e. the space of all absolutely continuous 
$\RE^d$-valued functions $f$ on $[0,1]$, with $f(0)=0$, equipped with inner product
\begin{equation*}
\langle \varphi,\psi\rangle_\CM:=\int_{[0,1]} \langle  \varphi'(x) , \psi'(x) \rangle_{\RE^d}  
\,dx.
\end{equation*} 
Moreover let $S_i$, $i\in\N$, be the ONB 
in $\CM$ consisting of the Schauder functions.
Denote by $\mathcal{B}$ the $\sigma$-algebra of the Borel sets on $\Omega$. Let $\nu$ denote 
the Wiener measure on $(\Omega,\mathcal{B})$. Let $0<\lambda_1\le\lambda_2\le\ldots\, $ be a 
sequence of real numbers satisfying 
\begin{align}
	\label{eq:mc:lambda}
	&\sum_{m=0}^\infty  2^{-m}\lambda_{d2^m} < \infty. 
\end{align}
This paper is concerned with an Ornstein-Uhlenbeck type Dirichlet form $(\EF,D(\EF))$ obtained 
by the closure of the positive symmetric bilinear form 
\begin{align}
	\label{eq:mc:ef}
	\EF(F,G)=\int\sum_{i=1}^\infty \lambda_i\langle DF, S_i\rangle_\CM\, \langle DG, 
	S_i\rangle_\CM\, d\nu, 
	\quad F,G\in Y, 
\end{align} 
on $L^2(\nu)$. The Dirichlet form $(\EF,D(\EF))$ is a particular case of the set of Dirichlet 
forms associated with a certain class of infinite dimensional processes with unbounded 
diffusion introduced and studied in \cite{Lobus2004}, \cite{WangWu2008}, \cite{ChenWu2014}, 
and \cite{KarlssonLobus2014}. In particular, it has been shown in \cite{KarlssonLobus2014}, 
Proposition 4.2, that condition \eqref{eq:mc:lambda} is necessary and sufficient for closability 
of $(\EF,Y)$ on $L^2(\nu)$. Furthermore, its closure $(\EF,D(\EF))$ on $L^2(\nu)$ is 
quasi-regular. 

\subsection{Main results}

Let $G_i$, $i\in\N$, be a sequence of independent one-dimensional Ornstein-Uhlenbeck processes,  
i.e. we have $dG_i(t)=- G_i(t)dt+\sqrt{2}\,dW_i(t)$, $t\ge 0$, for a sequence of independent 
one-dimensional Wiener processes $W_i$, $i\in\N$. This choice of $G_i$ corresponds to the fact that $G_i$ is related to the one-dimensional analogue of $\EF$, namely $\EF^1(f,g)=\int f'g'\, \varphi dx$ where $\varphi$ denotes the standard normal density. In the present paper we consider the Dirichlet 
form corresponding to \eqref{eq:mc:ef} and show in Lemma \ref{lem:rc:gi:2} and Proposition 
\ref{rc:prop:4} that the associated stochastic process has the representation 
\begin{align}
	\label{intro:eq:2}
	X_t:=\sum_{i=1}^\infty G_i(\lambda_it)\cdot S_i\, ,\quad t\ge 0, 
\end{align}
where the right-hand side converges in the norm of $C_0([0,1];\R^d)$ almost surely. 
\medskip 

The main result of Section \ref{rc:section:1} is that, provided that 
\begin{align} 
	\label{eq:intro:1}
	\sum_{m=0}^\infty 2^{-\frac{m}{2}}\cdot\lambda_{d2^m}\cdot\left(\max_{d2^m+1< i\le d2^{m+1}} 
	|G_i(0)|+m^\hf\right) <\infty ,
\end{align} 
the sequence of processes $X_t^{(n)}:=\sum_{i=1}^n G_i(\lambda_it)\cdot S_i$, $t\ge 0$, converges 
in distribution to $X_t$, $t\ge 0$, in the space $C_{C_0([0,1]; \R^d)}([0,\infty))$ of all 
continuous $C_0([0,1]; \R^d)$-valued trajectories. 
\medskip 

In Section \ref{qv:section:1} we determine the scalar quadratic variation of $X$. For this let $L^1([0,1];\R^d)$ denote the set of all integrable $\R^d$-valued functions $\gamma$ on 
$[0,1]$ and let $L^1([0,1];\R^d)$ be endowed with the norm  $\|\gamma\|_1:=\sum_{i=1}^d\int_0^1|\gamma_i|\, dx$ where $\gamma=(\gamma_1,\ldots ,\gamma_d)$. We consider 
$X$ as a process with either $C_0([0,1]; \R^d)$-valued or $L^1([0,1];\R^d)$-valued trajectories.   
Correspondingly, let $\|\, \cdot\, \|$ denote the norm in either $C_0([0,1]; \R^d)$ or 
$L^1([0,1];\R^d)$. Let $\xi_i$, $i\in \N$, be a sequence of independent identically distributed 
standard normal random variables and define 
\begin{align*}
	\theta:=2E\left[\left\|\sum_{i=1}^\infty\lambda_i^\hf \xi_iS_i\right\|^2\right]
\end{align*} 
where we note that $\theta$ depends on the choice of the norm. Suppose now that 
\begin{align}
	\label{eq:intro:2}
	\sum_{m=0}^\infty 2^{-\frac{m}{2}}\cdot\lambda_{d2^{m+1}}\cdot\left(\max_{d2^m+1< i 
	\le d2^{m+1}}|G_i(0)|+m^\hf\right)<\infty 
\end{align} 
and note the similarity to \eqref{eq:intro:1}. Let $\tau^n=\{0=t_0^n,t_1^n,\ldots,t_{k(n)}^n=T\}$, $n\in \N$, be an arbitrary sequence of partitions on $[0,T]$, $T>0$ such that 
$\lim_\nti |\tau^n|=0$. We show that in the ucp sense (uniform convergence in probability)
\begin{align*}
	[X]_t:=\lim_{\nti}\sum_{j:t_j^n\le t}\left\| X_{t_j^n}-X_{t_{j-1}^n}\right\|^2 
	=\theta t, 
\end{align*} 
cf. Proposition \ref{qv:cla:prop:1}. Furthermore in Proposition \ref{qv:reg:prop:1} we verify 
that for the scalar quadratic variation relative to the stochastic calculus via regularization, 
cf. \cite{RussoVallois2007}, \cite{Girolami2014}, and \cite{GirolamiRusso2014}, it holds that 
\begin{align*}
	\frac{1}{\delta} \int_0^t \left\|X_{s+\delta}-X_s\right\|^2 \, ds \stack {\delta\to 0}{\lra} 
	t\theta \quad \text{ucp on } t\in [0,\infty). 
\end{align*}
\medskip

The aim of Section \ref{tv:section:1} is to determine the tensor quadratic variation of the 
process $X$ in the Banach space $\lptp$ relative to the stochastic calculus via regularization. 
That is, we examine
\begin{align*}
	[X]^\otimes_t:=\lim_{\delta\to 0} \int_0^t \frac{(X_{u+\delta}-X_u)\otimes (X_{u+\delta} 
	-X_u)}{\delta} \,du
\end{align*}
in the ucp sense with respect to the norm in $\lptp$. For this, let $\xi_i$, $i\in \N$, be a 
sequence of independent standard normal random variables and define
\begin{align*}
	\Theta:=2E\left[\left(\sum_{i=1}^\infty \lambda_i^\hf \xi_i S_i \right)\otimes \left( 
	\sum_{i'=1}^\infty \lambda_{i'}^\hf\xi_{i'} S_{i'} \right)\right].
\end{align*} 
Provided that we have \eqref{eq:intro:2} we prove  
\begin{align*}
	[X]^\otimes_t= t\Theta ,\quad t\in [0,\infty). 
\end{align*}

Section \ref{it:section:1} is devoted to the It\^o formula corresponding to \eqref{intro:eq:2}. 
Using recent results presented in \cite{Girolami2014} and \cite{GirolamiRusso2014}, our efforts 
in Sections \ref{qv:section:1} and \ref{tv:section:1} lead to
\begin{align*}
	F(t,X_t)&=F(0,X_0)+\int_0^t \frac{\partial}{\partial s} F(s,X_s)\, ds+\int_0^t 
	{}_{B^\ast}\langle DF(s,X_s),dX_s\rangle_B \\
	&\quad+\hf \int_0^t {\vphantom{\big(}}_{(B\ptp B)^\ast}\left\langle D^2F(s,X_s), 
	\Theta\right\rangle_{(B\ptp B)^{\ast\ast}}\, ds
\end{align*}
where $B=L^1([0,1];\R^d)$. We also verify that, for a certain class of cylindrical functions 
$F$, the expression on the right hand side takes the well known form of the finite dimensional 
It\^o formula.
\medskip

We would like to emphasize that infinite dimensional Ornstein-Uhlenbeck processes of the form 
\eqref{intro:eq:2} allow a decomposition $X_t=Y_t+Z_t+A_t$, $t\ge 0$, where 
\begin{align*}
	Y_t=\sum_{i=1}^\infty \int_{u=0}^te^{-\lambda_iu}\, dW_i\left(e^{2\lambda_iu}-1\right) 
	\cdot S_i, 
\end{align*} 
$Z_t=-\sum_{i=1}^\infty \int_{u=0}^t\lambda_ie^{-\lambda_iu}W_i\left(e^{2\lambda_iu}-1\right) 
\, du\cdot S_i$, and	$A_t=\sum_{i=1}^\infty e^{-\lambda_it}G_i(0)\cdot S_i$. This 
decomposition is reasonable since the process $Y$ has independent increments. Independent 
increments are particularly useful for calculating a quadratic variation. Furthermore under 
mild conditions, the parts $Z$ and $A$ have a zero quadratic variation. In addition, we observe 
that the components $G_i(\lambda_it)\cdot S_i$ of 
$X_t=\sum_{i=1}^\infty G_i(\lambda_it)\cdot S_i$ are independent. In order to carry out the 
technical calculations of Sections \ref{rc:section:1}--\ref{tv:section:1} these independences 
are crucial. 

Several times we use results from extreme value theory which in part we derive in the appendix.

\subsection{Some basic definitions} 

Let $\Omega\equiv C_0([0,1];\R^d):=\{f\in C([0,1];\R^d):f(0)=0\}$ be the space of trajectories, endowed with the norm 
$\|f\|:=\sup\{|f_i(x)|:x\in [0,1],\ i\in\{1,\ldots ,d\}\}$ where $f=(f_1,\ldots ,f_d)$ and 
$f_1,\ldots ,f_d\in C_0([0,1];\R)$. Furthermore, let $L^1([0,1];\R^d)$ be endowed with the 
norm $\|f\|_1:=\sum_{i=1}^d\int_0^1|f_i|\, dx$ where $f=(f_1,\ldots ,f_d)$ and $f_1,\ldots , 
f_d\in L^1([0,1];\R)$. This choice of the norm in $L^1([0,1];\R^d)$ guarantees compatability with the tensor product in Section \ref{tv:section:1}.

Let $(e_j)_{j=1,\dots,d}$ denote the standard basis in $\RE^d$ and introduce the system 
of the Haar functions on $[0,1]$ by 
\begin{align}
\begin{split}
\label{eq_def_haar1}
&H_1(t)=1,\quad t\in [0,1], \\
  &H_{2^m+k}(t) = \left\{
  \begin{array}{l l l}
    2^{\frac{m}{2}} & \quad \text{if $t \in \left[\frac{2k-2}{2^{m+1}},\frac{2k-1}{2^{m+1}}\right)$}\\ 
	\vspace{-.3cm} & & \\ 
    -2^{\frac{m}{2}} & \quad \text{if $t \in \left[\frac{2k-1}{2^{m+1}}, \frac{2k}{2^{m+1}}\right)$} 
	\qquad k=1,\dots,2^m, \ m=0,1,\dots\, .\\ \vspace{-.3cm} & & \\ 
    0 & \quad \text{otherwise}
  \end{array} \right.
  \end{split}
\end{align}
Also define 
\begin{equation*}
\label{eq_def_gn}
g_{d(r-1)+j}:=H_r \cdot e_j, \quad r\in \N, \ j\in \{1,\dots,d\},
\end{equation*} 
and 
\begin{align}	
	\label{intro:eq:1}
	S_i(s):=\int_0^s g_i(u)\, du,\quad\mbox{\rm as well as}\quad\langle S_i,\gamma \rangle:= 
	\int_0^1 g_i(u)\, d\gamma_u.
\end{align} 

Introduce
\begin{align*}
\tilde{Y}:=&\left\{F(\gamma)=f\left(\langle S_1,\gamma \rangle,\dots,\langle S_k,\gamma \rangle 
\right): f \in C^{\infty}_p({\RE}^k),\, k\in \N \right\} 
\end{align*} 
as well as 
\begin{align*}
\tilde{Y}_n:=\left\{F(\gamma)=f\left(\langle S_1,\gamma \rangle,\dots,\langle S_k, 
\gamma \rangle\right): f \in C^{\infty}_p({\RE}^k),\, k\le n \right\}, \quad n\in \N.
\end{align*} 

\begin{lemma}
	We have $Y=\tilde{Y}$.
\end{lemma}
\begin{proof}
For the sake of clarity we concentrate on the case $d=1$. We note that for any dyadic point 
$s\in [0,1]$ we have $\gamma(s)=\int_0^1 \I_{[0,s]}(u)\, d\gamma_u$. We also note that 
$\langle S_i,\gamma \rangle$ can be written as
	\begin{align*}
		&\langle S_i,\gamma \rangle=\int_0^1 H_i(u) \, d\gamma_u\\
		&\qquad=2c\int_0^1 \I_{[0,s_2]}(u)\, d\gamma_u-c\int\I_{[0,s_1]}(u) 
		\, d\gamma_u-c\int\I_{[0,s_3]}(u)\, d\gamma_u
	\end{align*}
where for $i=2^m+k$, $m=0,1,\ldots\, $, $k=1,\ldots\, ,2^m$, we have $c=2^{\frac{m}{2}}$, 
$s_1=(k-1)/2^m$, $s_2=(2k-1)/2^{m+1}$, and $s_3=k/2^m$. Thus, any element of $\tilde{Y}$ 
belongs to $Y$. As for the converse direction, we note that for any dyadic $s\in [0,1]$ it 
holds that
	\begin{align*}
		 \I_{[0,s]}(u)=\sum_{i=1}^{N_s} \alpha_i H_i(u) 
	\end{align*} 
for some $N_s\in \N$ and $\alpha_1,\ldots ,\alpha_{N_s}\in\R$. The statement follows.
\end{proof}




\section{Finite dimensional approximation of the infinite dimensional process} 
\label{rc:section:1}

In this section we turn our attention to finite dimensional Dirichlet forms $(\EF_n,D(\EF_n))$, 
$n\in \N$, on suitable finite dimensional subspaces of $L^2(\nu)$ given by the closure of 
\begin{align}
	\label{eq:rc:form}
		\EF_n(F,G)=\int\sum_{i=1}^n\lambda_i\langle DF, S_i\rangle_\CM\, \langle DG, 
		S_i\rangle_\CM \, d\nu,\quad 
		F,G\in \tilde{Y}_n, 
\end{align}
where $0<\lambda_1\le\lambda_2\le \ldots$ is a sequence of constants. We assume that 
for a sequence of independent one-dimensional standard Wiener processes $W_i$, $i\in \N$, we have 
	\begin{align}
		\label{eq:rc:ou}
		dG_i(t)=- G_i(t)dt+\sqrt{2}\,dW_i(t),\quad t\ge 0.
	\end{align}
In particular, with a non-random initial value $G_i(0)$ the variable $G_i(\lambda_it)$ is 
$N(G_i(0)e^{-\lambda_it},1-e^{-2\lambda_it})$-distributed, $t>0$. In other words the $G_i$ are independent Ornstein-Uhlenbeck processes on $\R$ .
Therefore we have
\begin{align*}
	dG_i(\lambda_it)=-\lambda_iG_i(\lambda_i t)\, dt+\sqrt{2\lambda_i}\, dV_i(t),\quad i\in \N,
\end{align*}
where $V_i=1/\sqrt{\lambda_i}\ W_i(\lambda_i t)$ is a Wiener process. Consequently $G_i(\lambda_i \cdot)$ is associated with the $L^2(\R)$-generator $f\equiv f(x)\mapsto \lambda_i f''(x)-\lambda_i x f'(x)$ and hence with the bilinear form $(f,g)\mapsto \lambda_i\int f'(x)g'(x)\varphi(x)dx$, $f,g\in C_p^\infty(\R)$. Since $S_i$, $i\in \N$, is an orthonormal basis in the Cameron-Martin space and the $G_i$s are independent, the stochastic process
	\begin{align}
		\label{rc:intro:eq:1}
		X_t^{(n)}=\sum_{i=1}^n  G_i(\lambda_it)\cdot S_i,\quad t\ge 0,\ n\in\N,  
	\end{align}
is associated with the bilinear form $(\EF_n,D(\EF_n))$ given by \eqref{eq:rc:form}.
On the other hand, we recall from \cite{KarlssonLobus2014}, Proposition 5.2, that 
$(\EF,D(\EF))$ given by the closure of 
	\begin{align}
		\EF(F,G)=\int\sum_{i=1}^\infty \lambda_i\langle DF, S_i\rangle_\CM\, \langle DG, 
		S_i\rangle_\CM \, d\nu, 
		\quad F,G\in \tilde{Y}, 
	\end{align} 
is a quasi-regular Dirichlet form on $L^2(\nu)$ provided that $\sum_{m=0}^\infty 2^{-m}\cdot 
\lambda_{d2^m}<\infty$. The objective of this section is to identify the stochastic process which 
is associated with $(\EF,D(\EF))$ and to approximate it by the sequence $X^{(n)}$. Lemma 
\ref{lem:rc:gi:2} - Proposition \ref{rc:prop:4} are devoted to the well-definiteness and 
association with $(\EF,D(\EF))$ of the limit process.   

\begin{lem}
	\label{lem:rc:gi:2}
Assume that the processes $G_i(\lambda_it)$, $t\ge 0$, $i\in \N$, have been started with  
non-random initial values $G_i(0)\in \R$, $i\in \N$, such that $\sum_{i=1}^\infty |G_i(0)|S_i(s)$ 
converges in $C_0([0,1];\R^d)$. Let $X_0:=\sum_{i=1}^\infty G_i(0)S_i(s)$. Then almost surely the 
sum $\sum_{i=1}^\infty |G_i(\lambda_it)|\cdot S_i$, and hence 
	\begin{align}
		\label{rc:lem:1:eq:1}
		X_t:=\sum_{i=1}^\infty G_i(\lambda_it)\cdot S_i,
	\end{align}
converges in $C_0([0,1];\R^d)$ uniformly for all $t\ge 0$.
\end{lem}
\begin{proof}
We know that $G_i(\lambda_it)$ is $N(G_i(0)e^{-\lambda_it},1-e^{-2\lambda_it})$-distributed and 
that for any standard normal random variable $\xi$ we have  $P(\xi>x)\le 
\frac{1}{\sqrt{2\pi}x}e^{-\frac{x^2}{2}}$, $x>0$, see e.g. \cite{MortersPeres}, Appendix II, 
Lemma 3.1. Thus it holds for all $i\in \N$, all $t\ge 0$, and all $y>|G_i(0)|e^{-\lambda_it}$ 
that
	\begin{align*}
		P(|G_i(\lambda_it)|>y)\le \frac{\sqrt{2}}{\sqrt{\pi}\left(y-|G_i(0)|e^{-\lambda_it} 
		\right)}\cdot\exp\left\{-\hf\left(y-|G_i(0)|e^{-
		\lambda_it}\right)^2\right\}.
	\end{align*} 
Next we track down the numbering of the Schauder functions $S_i$, $i\in \N$. Let $i=d(r-1)+j$, 
$j\in\{1,\ldots,d\}$, and for $r\ge 2$, let $r=2^m+k$ where $m\in \{0,1,\ldots\, \}$, $k\in 
\{1,\ldots,2^m\}$. Now set $n(i):=m$ if $i\ge 2d+1$. We obtain 
	\begin{align*}
		&\sum_{i=2d+1}^\infty P\left(|G_i(\lambda_it)|>|G_i(0)|e^{-\lambda_it}+\sqrt{2n(i)}\right)
		\le \frac{1}{\sqrt{\pi}}\sum_{i=2d+1}^\infty\frac{1}{\sqrt{n(i)}}e^{-n(i)}\\
		&\qquad=\frac{1}{\sqrt{\pi}}\sum_{m=1}^\infty d2^m \frac{1}{\sqrt{m}}e^{-m}<\infty.
	\end{align*}
The Borel-Cantelli lemma states now that $\nu$-a.s. there is an $i_0$ such that 
	\begin{align*}
		|G_i(\lambda_it)|\le |G_i(0)|+\sqrt{2n(i)},\quad i>i_0.
	\end{align*}
Thus for $m$ such that $d2^m+1>i_0$ we have
	\begin{align}
		\notag
		&\left|\sum_{i=d2^m+1}^\infty G(\lambda_it)S_i(s)\right| 
		\le\sum_{i=d2^m+1}^\infty |G(\lambda_it)|S_i(s)
		\le \sum_{i=d2^m+1}^\infty|G_i(0)|S_i(s)+\sum_{i=d2^m+1}^\infty \sqrt{2n(i)}S_i(s)\\
		\label{rc:lem:2:eq:1}
		&\qquad\le \sum_{i=d2^m+1}^\infty|G_i(0)|S_i(s)+\sum_{m'=m}^\infty \sqrt{2m'} 
		2^{-\frac{m'+2}{2}}, \quad s\in [0,1].
	\end{align}
We have used the fact that for any $s\in [0,1]$ there is just one $d2^{m'}<i\le d2^{m'+1}$ 
such that $S_i(s)\neq 0$. The last chain of inequalities implies the lemma.
\end{proof}

We are now interested in the semigroup associated with the process \eqref{rc:lem:1:eq:1}. 
%
For a point ${\bf y}\equiv\sum_{i=1}^\infty y_iS_i\in C_0([0,1];\R^d)$ with $y_i\in \R$, $i\in 
\N$, introduce
\begin{eqnarray*}
T_t{\bf y}:=\sum_{i=1}^\infty y_ie^{-\lambda_it}S_i
\end{eqnarray*}
and
\begin{eqnarray*}
U_t{\bf y}:=\sum_{i=1}^\infty y_i\, \left(1-e^{-2\lambda_it}\right)^{\frac12}\, S_i\, ,\quad 
t>0.
\end{eqnarray*}
Define $\mu_t:=\nu\circ U_t^{-1}$, $t>0$.
Let $C_b(C_0([0,1];\R^d))$ denote the space of all bounded continuous functions on $C_0([0,1];\R^d)$.

\begin{proposition}\label{rc:prop:3}
({\it Mehler semigroup representation}) Let $X_0:=\sum_{i=1}^\infty G_i(0) S_i$ be non-random 
such that $\sum_{i=1}^\infty |G_i(0)| S_i$ converges in $C_0([0,1];\R^d)$, cf. Lemma 
\ref{lem:rc:gi:2}. Furthermore let $X_t$ be given by \eqref{rc:lem:1:eq:1} and consider an arbitrary $F\in C_b(C_0([0,1];\R^d)) 
\cup \tilde{Y}$. Under these conditions we have
\begin{eqnarray*}
E_{X_0}[F(X_t)]=\int F\left(T_tX_0+{\bf y}\right)\, \mu_t(d{\bf y})\, ,\quad t>0.
\end{eqnarray*}
\end{proposition}
\begin{proof}
	It is sufficient to prove the claim for all cylindrical 
	functions $F\in\tilde{Y}$ since $C_b(C_0([0,1];\R^d)) 
\cap \tilde{Y}$ is dense in $C_b(C_0([0,1];\R^d))$ with respect to the $L^1(\nu)$-norm by the following reasoning. 
According to the approximation property as presented in \cite{Bauer} Theorem I.5.7 the simple cylindrical functions 
are dense in the set of all simple functions on $C_0([0,1];\R^d)$ with respect to the $L^1(\nu)$-norm. Noting that 
the simple functions on $C_0([0,1];\R^d)$ are dense in $L^1(\nu)$ and simple cylindrical function are dense in $\tilde{Y}$ with respect to the $L^1(\nu)$-norm
we may conclude that $C_b(C_0([0,1];\R^d)) \cap \tilde{Y}$ is dense in $C_b(C_0([0,1];\R^d))$.
\medskip

For $i\in \N$ and $t>0$ let
	\begin{eqnarray*}
		\psi^{(i)}_{t,G_i(0)}(x)\, ,\ x\in \R,\quad\mbox{\rm be the density of}\quad
		N\left(G_i(0)e^{-\lambda_it},1-e^{-2\lambda_it}\right)
	\end{eqnarray*}
	and note that $\psi^{(i)}_{t,G_i(0)}$ is the density of the process $G_i(\lambda_it)$ at 
	time $t>0$ when started in $G_i(0)$. We have 
	\begin{eqnarray*}
		E_{X_0}[F(X_t)]&&\hspace{-.5cm}=E_{X_0}\left[F\left(\sum_{i=1}^\infty 
		G_i(\lambda_it)S_i\right)\right]\vphantom{\int} \\
		&&\hspace{-.5cm}=E_{X_0}\left[f\left(G_1(\lambda_1t),\ldots ,G_k(\lambda_kt)\right) 
		\right]\vphantom{\int} \\
		&&\hspace{-.5cm}=\int\ldots\int f\left(y_1,\ldots ,y_k\right)\psi^{(1)}_{t,G_1(0)} 
		(y_1)\ldots \psi^{(k)}_{t,G_k(0)}(y_k)\, dy_1\ldots\, dy_k\vphantom{\int} \\
		&&\hspace{-.5cm}=\int\ldots\int f\left(y_1+G_1(0)e^{-\lambda_1t},\ldots ,y_k+
		G_k(0)e^{-\lambda_kt}\right)\psi^{(1)}_{t,0}(y_1)\ldots \psi^{(k)}_{t,0}(y_k)\,
		dy_1\ldots\, dy_k\vphantom{\int} \\
		&&\hspace{-.5cm}=\int F\left(T_tX_0+{\bf y}\right)\, \mu_t(d{\bf y}).
	\end{eqnarray*}
\end{proof}
	
\begin{prop} 
\label{rc:prop:4}
Suppose $\sum_{m=0}^\infty 2^{-m}\cdot \lambda_{d2^m}<\infty$. Then the process 
\eqref{rc:lem:1:eq:1} is associated 
with the Dirichlet form $(\EF,D(\EF))$ in the sense of 
\begin{align*}
	\lim_{t\to 0}\int\frac{F -E_{\cdot}[F(X_t)]}{t}\cdot H\, d\nu=\EF(F,H),\quad F,H\in\tilde{Y}. 
\end{align*}
\end{prop}
\begin{proof} 
Let $\varphi$ denote the density of $N(0,1)$. Using the notation of the proof of 
Proposition \ref{rc:prop:3} and $H({\gamma})=h\left(\langle S_1, \gamma\rangle,\ldots ,\langle 
S_k,\gamma\rangle\right)$ all we have to verify turns out to be a well-studied calculation in finite 
dimension, 
	\begin{align*} 
		&\lim_{t\to 0}\int\frac{F -E_{\cdot}[F(X_t)]}{t}\cdot H\, d\nu \\ 
		&\quad=\lim_{t\to 0}\frac{1}{t}\int \left(\int f(x_1,\ldots,x_k)-f(y_1+x_1e^{-\lambda_1t}, 
		\ldots,y_k+x_ke^{-\lambda_kt}) \times\right. \\
		&\left.\vphantom{\int}\qquad\times\psi_{t,0}^{(1)}(y_1)\ldots \psi_{t,0}^{(k)}(y_k)\, 
		dy_1,\ldots dy_k \right)
		\cdot h(x_1,\ldots,x_k)\, \varphi(x_1)\ldots\varphi(x_k)\, dx_1,
		\ldots dx_k \vphantom{\int}\\
		&\quad=\int \sum_{i=1}^k \lambda_i\left\langle DF,S_i\right\rangle_\CM  \left\langle DH,S_i\right\rangle_\CM 
		 d\nu=\EF(F,H).
	\end{align*}
\end{proof}

We continue with the first step toward the approximation of the infinite dimensional process  
\eqref{rc:lem:1:eq:1} by finite dimensional processes of the form \eqref{rc:intro:eq:1}. 

\begin{prop}
\label{rc:prop:1}
Let $(\EF_n,D(\EF_n))$, $n\in \N$, be given by \eqref{eq:rc:form} and let $X^{(n)}$ denote 
the associated process, see \eqref{rc:intro:eq:1}. Suppose 
\begin{align}
\label{eq:rc:lambda:1}
\sum_{m=0}^\infty 2^{-\frac{m}{2}}\cdot \lambda_{d2^m}\cdot m^{\hf}<\infty
\end{align} 
and choose an initial value $X^{(n)}(0):= \sum_{i=1}^\infty G_i(0)S_i(s)$ such that 
\begin{align}
	\label{eq:rc:prop:as:1}
	\sum_{m=0}^\infty 2^{-\frac{m}{2}}\, \max_{d2^m+1< i\le d2^{m+1}}|G_i(0)|\cdot 
	\lambda_{d2^m}<\infty. 
\end{align} 
Let $P_n$, $n\in \N$, denote the distribution of the process $X^{(n)}$ on the space $C_{C_0([0,1];\R^d)} 
([0,\infty))$ of continuous $C_0([0,1];\R^d)$-valued trajectories on $t\in [0,\infty)$. Then 
the family $P_n$, $n\in \N$, is relatively compact in the space of all probability measures on $C_{C_0([0,1];\R^d)} 
([0,\infty))$, endowed with the Prokhorov metric.
\end{prop} 
\begin{proof}
Let $t\ge 0$ and $u\ge 0$. Conditioning on $\F_t:=\sigma(X_s:s\le t)$ and taking into account the Markov property, 
the random variable 
	\begin{align*}
	Y_i:=G_i(\lambda_i(t+u))-G_i(\lambda_it)
	\end{align*}
 follows a $ N(\mu_i,\sigma_i^2)$-distribution where
	\begin{align}
		\label{eq:rc:mu_sigma}
		\mu_i=G_i(\lambda_it)\cdot(e^{-\lambda_i u}-1) \text{ and } \sigma_i^2=1-e^{-2\lambda_iu}.
	\end{align}
We aim to prove the claim by a conclusion similar to, for example, the final one in 
\cite{LyonsZhang1994}, Theorem 3.1.1. We have
	\begin{align}
		\label{eq:rc:mi:0}
		&E\left[\|X_{t+u}^{(n)}-X_t^{(n)}\|^4\right]
		=E\left[\left( \sup_{s\in[0,1]} \left| \sum_{i=1}^n Y_i\cdot S_i(s)\right| \right)^4 
		\right].
	\end{align}
where the norm $\|\, \cdot\, \|$ is taken in $C_0([0,1];\R^d)$. 
It follows that
	\begin{align}
		\notag
		\left( \sup_{s\in[0,1]} \left| \sum_{i=1}^n Y_i\cdot S_i(s)\right| \right)^4 	
		&\le \left(|Y_1|+ \sup_{s\in[0,1]} \left| \sum_{m=0}^\infty \max_{d2^m< i\le d2^{m+1}} 
		Y_i\cdot S_i(s)\right| \right)^4\\
		\label{eq:rc:mi:0.5}
		&\le  8 Y_1^4+8\left(\sum_{m=0}^\infty 2^{-\frac{m+2}{2}} \max_{d2^m< i\le d2^{m+1}} 
		|Y_i| \right)^4.
	\end{align}
We have
	\begin{align}
		\label{eq:rc:mi:1}
		&\nquad \nquad E\left[\left(  \sum_{m=0}^{\infty} 2^{-\frac{m+2}{2}} \max_{d2^m< i\le d2^{m+1}} |Y_i| 
		\right)^4\right] 
		=\!\!\!\sum_{m_1,\ldots ,m_4=0}^{\infty}\ \prod_{j=1}^4\left( E\left[2^{-\frac{a_j(m_j+2)}{2}} 
		\cdot  \max_{d2^m_j< i\le d2^{m_j+1}} |Y_i|^{a_j} \right]\right)^{\frac{1}{a_j}}
	\end{align} 
where 
$a_j\equiv a_j(m_1,m_2,m_3,m_4)$ 
is the number of times $m_j$ appears among $m_1,m_2,m_3,m_4$. Now decompose $Y_i$ in the following manner. Let $\xi_i\sigma_i+\mu_i=Y_i$, 
$k\in \{1,\ldots,4\}$, where $\xi_i$ is a standard normal random variable and $\mu_i$ and 
$\sigma_i$ are given by \eqref{eq:rc:mu_sigma}. Taking the conditional expectation gives
	\begin{align*}
		&E\left[2^{-\frac{k(m+2)}{2}}\cdot \left(\max_{d2^m< i\le d2^{m+1}}|Y_i|\right)^k \right] 
		=E\left[2^{-\frac{k(m+2)}{2}} \cdot E\left[ \max_{d2^m< i\le d2^{m+1}}|\xi_i\sigma_i+ 
		\mu_i|^k \Big|\F_t\right]\right].
	\end{align*}
We recall that both, $\mu_i$, $i\in \N$, and $\xi_i$, $i\in \N$, are by construction sequences 
of independent random variables. Moreover, $\sigma_i>0$ is non-random, $\mu_i$ is measurable with 
respect to $\F_t$, and $\xi_i$ is independent of $\F_t$. It follows that 
	\begin{align*}
		&\nquad \nquad E\left[2^{-\frac{k(m+2)}{2}} \cdot  \max_{d2^m< i\le d2^{m+1}} 
		|Y_i|^k \right]\\
		&	\le  8\cdot 2^{-\frac{k(m+2)}{2}} \cdot \max_{d2^m< i\le d2^{m+1}} 
		\sigma_i^k\cdot E\left[\max_{d2^m< i\le d2^{m+1}} |\xi_i|^k \right]+8E\left[ 
		2^{-\frac{k(m+2)}{2}} \cdot \max_{2^m< i\le 2^{m+1}} |\mu_i|^k\right],
	\end{align*}
$k\in \{1,\ldots,4\}$. Now we apply $E\left[\max_{1\le i\le n} |\xi_i|^k \right]\le D 
(\ln n)^{\frac{k}{2}}$ for some $D>0$ independent of $n\ge 2$ which is a standard result from 
extreme value theory.  
We obtain
	\begin{align}
		\notag
		&E\left[2^{-\frac{k(m+2)}{2}} \cdot \max_{d2^m< i\le d2^{m+1}} |Y_i|^k \right]\\
		\label{eq:rc:1}
		&\qquad\le 8D_1\cdot 2^{-\frac{k(m+2)}{2}}\cdot \max_{d2^m< i\le d2^{m+1}}\sigma_i^k\cdot  
		m^{\frac{k}{2}}+8E\left[2^{-\frac{k(m+2)}{2}} \cdot \max_{2^m< i\le 2^{m+1}} |\mu_i|^k 
		\right], 
	\end{align}
$k\in \{1,\ldots,4\}$, for some $D_1>0$ independent of $m\ge 1$. Next we recall that
	\begin{align*}
		\mu_i \text{ is }N\left(G_i(0)e^{-\lambda_it}(1-e^{-\lambda_iu}),(1-e^{-2\lambda_it}) 
		(1-e^{-\lambda_iu})^2\right)\text{-distributed} 
	\end{align*}
and that $\sigma_i=\left(1-e^{-2\lambda_iu}\right)^\hf$. Therefore we may write 
	\begin{align*}
		\mu_i=G_i(0)e^{-\lambda_it}(1-e^{-\lambda_iu})+\left(1-e^{-2\lambda_it}\right)^\hf 
		(1-e^{-\lambda_iu})\cdot \eta_i,
	\end{align*}
where $\eta_i$, $i\in \N$, is a sequence of independent $N(0,1)$-distributed random variables. 
Applying this to \eqref{eq:rc:1} 
we obtain
	\begin{align}
	\notag
	&E\left[2^{-\frac{k(m+2)}{2}} \max_{d2^m< i\le d2^{m+1}} |Y_i|^k\right]\\
	\notag
		&\quad\le 8D_1\cdot 2^{-\frac{k(m+2)}{2}} \cdot \max_{d2^m< i\le d2^{m+1}} \sigma_i^k 
		\cdot m^\frac{k}{2} \\
		\notag
		&\qquad+8E\left[2^{-\frac{k(m+2)}{2}} \cdot \max_{d2^m< i\le d2^{m+1}} \left|G_i(0) 
		e^{-\lambda_it}(1-e^{-\lambda_iu})+\left(1-e^{-2\lambda_it}\right)^\hf 
		(1-e^{-\lambda_iu})\cdot \eta_i\right|^k \right]\\
		\notag
		&\quad\le 8D_1\cdot 2^{-\frac{k(m+2)}{2}}\cdot \left(1-\exp\left\{-2\lambda_{d2^m}\cdot u 
		\right\}\vphantom{l^1}\right)^\frac{k}{2}\cdot m^\frac{k}{2} \\
		\notag
		&\qquad+8^2\cdot 2^{-\frac{k(m+2)}{2}}\cdot \left(1-\exp\left\{-\lambda_{d2^m}\cdot u 
		\right\}\vphantom{l^1}\right)^k\left( \max_{d2^m< i\le d2^{m+1}}|G_i(0)|^k + D_1 \cdot 
		m^\frac{k}{2}\right) \\
		\label{eq:rc:fin:1}
		&\quad\le D_2\cdot 2^{-\frac{k(m+2)}{2}} \cdot \lambda_{d2^m}^k\cdot u^\frac{k}{2} \cdot 
		\left(\max_{d2^m< i\le d2^{m+1}}|G_i(0)|^k + m^\frac{k}{2}\right), \quad u\in [0,1], 
	\end{align}
$k\in \{1,\ldots,4\}$, for some $D_2>0$ independent of $m\ge 1$ and $t\ge 0$. Taking into 
account conditions \eqref{eq:rc:lambda:1} and \eqref{eq:rc:prop:as:1} we apply \eqref{eq:rc:fin:1} 
to \eqref{eq:rc:mi:1} and thus also to \eqref{eq:rc:mi:0} and obtain
	\begin{align}
	\label{eq:rc:fin:2}
	\nquad E\left[\|X_{t+u}^{(n)}-X_t^{(n)}\|^4\right]\le 8E[Y_1^4]+8E\left[\left( \sum_{m=0}^{\infty} 
	2^{-\frac{m+2}{2}} \max_{d2^m< i\le d2^{m+1}} |Y_i| \right)^4\right] \le C\cdot u^2,\quad 
	u\in [0,1], 
	\end{align}
for some $C>0$ independent of $n\in \N$ and $t\ge 0$. It follows now from a straight forward 
generalization of \cite{Billingsley}, Theorem 8.3, and \cite{StroockVaradhan}, Corollary 2.1.4, 
to Banach space valued variables, that the family $P_n$, $n\in \N$, is relatively compact in the 
space $C_{C_0([0,1];\R^d)}([0,\infty))$ of continuous trajectories.
\end{proof}

We now show convergence in distribution of $X_t^{(n)}=\sum_{i=1}^n  G_i(\lambda_it)\cdot S_i$, 
$t\ge 0,$ to $X_t:=\sum_{i=1}^\infty G_i(\lambda_it)\cdot S_i$ as $n\to\infty$. 

\begin{prop}
	\label{rc:prop:2} 
Under the assumptions of Proposition \ref{rc:prop:1} the following holds.\\ 
(a) The process $X_t:=\sum_{i=1}^\infty G_i(\lambda_it)\cdot S_i$, $t\ge 0$, has almost surely 
a continuous modification. We will use this modification from now on. \\ 
(b) Let $P_n$ and $P$ denote the distributions associated with the processes $X^{(n)}$ and $X$ 
respectively. The finite dimensional distributions of $X^{(n)}$ converge weakly to those of $X$. 
That is 
	\begin{align*}
		&\int \varphi(x_1,\ldots,x_m)\, dP_n(X^{(n)}(t_1)\in dx_1,\ldots,X^{(n)}(t_m)\in dx_m)\\
		&\quad\lni \int \varphi(x_1,\ldots,x_m)\, dP(X(t_1)\in dx_1,\ldots,X(t_k)\in dx_m)
	\end{align*}
for all $m\in \N$ and all functions	$\varphi$ belonging to some algebra of convergence determining 
functions over $\left(C_0([0,1];\R^d)\right)^m$, cf. \cite{EthierKurtz} Section 4.4.
\end{prop}
\begin{proof} 
(a) It follows from \eqref{eq:rc:mi:0}, \eqref{eq:rc:mi:0.5}, and \eqref{eq:rc:fin:2} that 
	\begin{align*}
	E\left[\|X_{t+u}-X_t\|^4\right]\le C\cdot u^2,\quad u\in [0,1], 
	\end{align*}
with the same $C>0$ as in \eqref{eq:rc:fin:2}. In fact, in \eqref{eq:rc:mi:0.5} we turn to 
calculations which hold uniformly for all $n\in \N$ and also for estimating $E\left[\|X_{t+u}-X_t 
\|^4\right]$. By  Kolmogorov's criterion, cf. 
\cite{DaPratoZabczyk}, Theorem 3.3, we get continuity of 
the trajectories of $X$ as claimed. Alternatively, part(a) follows also from Proposition 
\ref{qv:cla:prop:1} below  which is proved independently of the results of the present section. 
\medskip 

\noindent 
(b) 
By \cite{EthierKurtz} Theorem 3.4.5 (b) it is sufficient to show that
 	\begin{align}
		\label{rc:prop:2:eq:1}
		\int F_1\big(X^{(n)}(t_1)\big)\cdot\ldots\cdot F_m\big(X^{(n)}(t_m)\big)\, dP_n \lni 
		\int F_1\big(X(t_1)\big)\cdot\ldots\cdot F_m\big(X(t_m)\big)\, dP
 	\end{align}
for all $F_i\in \tilde{Y}$ of type $F_i({\bf x})=f_{i1}(\langle S_1,{\bf x}\rangle)\cdot \ldots 
\cdot f_{ik}(\langle S_k,{\bf x}\rangle)$, $i=1,\ldots,m$. It is sufficient since the functions $\varphi({\bf x}_1 
,\ldots ,{\bf x}_m):=F_1({\bf x}_1)\cdot\ldots\cdot F_m({\bf x}_m)$ generate a convergence 
determining algebra over $\left(C_0([0,1];\R^d)\right)^m$. 
First we construct the Mehler representation relative to $X^{(n)}$. Recalling our approach and 
our notation in Proposition \ref{rc:prop:3} we define for ${\bf y}^{(n)}=\sum_{i=1}^n y_iS_i\in 
C_0([0,1];\R^d)$ with $y_i\in\R$, $i\in\N$, 
	\begin{align*}
		&T_t^{(n)}{\bf y}^{(n)}:=\sum_{i=1}^n y_ie^{-\lambda_it}S_i\quad \text{and}\quad 
		U_t^{(n)}{\bf y}^{(n)}:=\sum_{i=1}^n y_i(1-e^{-2\lambda_it})^\hf S_i,\quad t\ge 0. 
	\end{align*}
Furthermore, let 
	\begin{align*}
		&\mu_t^{(n)}:=\nu^{(n)}\circ (U_t^{(n)})^{-1} 				
	\end{align*}
where $\nu^{(n)}$ is the projection of the Wiener measure to the linear span of $\{S_1,\ldots, 
S_n\}$. Let $\mathcal{T}_t^{(n)}$ denote the semi-group associated with $(\EF_n,D(\EF_n))$ and 
let and $X_0^{(n)}$ denote the non-random initial value of $X^{(n)}$, $n\in \N$. Let us consider 
cylindrical functions $F\in\tilde{Y}$ and $F_n\in\tilde{Y}$, $n\in \N$, of type $F({\bf x})=f_1(\langle S_1,{\bf x}\rangle)\cdot  
\ldots\cdot f_k(\langle S_k,{\bf x}\rangle)$ and $F_n({\bf x})=f_1(\langle S_1,{\bf x}\rangle) 
\cdot\ldots\cdot f_{n\wedge k}(\langle S_{n\wedge k},{\bf x}\rangle)$, for ${\bf x}\in C_0([0,1]; 
\R^d)$. Following the proof of Proposition \ref{rc:prop:3}, the Mehler representation of 
$\mathcal{T}_t^{(n)}F_n$ is 
	\begin{align}
		\label{rc:prop:1:eq:1}
		&\left(\mathcal{T}_t^{(n)}F_n\right)\big(X^{(n)}_0)=\int F_n(T_t^{(n)}X^{(n)}_0 
		+{\bf y}^{(n)})\, \mu_t^{(n)}(d{\bf y}^{(n)}) 
	\end{align} 
where we mention that $(\mathcal{T}_t^{(n)}F_n)\big(X^{(n)}_0)$ is of the form $g_1(\langle 
S_1,{\bf x}\rangle)\cdot\ldots\cdot g_{n\wedge k}(\langle S_{n\wedge k},{\bf x} \rangle)$. 
Using the fact that $F(T_t^{(n)}X^{(n)}_0+{\bf y}^{(n)})=F(T_tX_0+{\bf y})$ for $n\ge k$ we obtain 
	\begin{align*}
		\int F(T_t^{(n)}X^{(n)}_0+{\bf y}^{(n)})\, \mu_t^{(n)}(d{\bf y}^{(n)})
		=\int F(T_tX_0+{\bf y})\, \mu_t(d{\bf y}),\quad n\ge k.
	\end{align*}
Denoting by $\mathcal{T}_t$, $t\ge 0$, the semigroup associated with $(\EF,D(\EF))$, by 
\eqref{rc:prop:1:eq:1} it follows that $\mathcal{T}_t^{(n)}F = \mathcal{T}_tF$, $t\ge 0$, for 
sufficiently large $n\in\N$. Let us fix $F_1,\ldots,F_m \in \tilde{Y}$, $m\in \N$. For 
sufficiently large $n\in\N$ it holds that 
	\begin{align*}
		&\int F_1\left(X^{(n)}(t_1)\right)\cdot\ldots\cdot F_m\left(X^{(n)}(t_m)\right)\, 
		dP_n\\
		&\quad=\int\left(\mathcal{T}_{t_1}^{(n)}\left(F_1\cdot \mathcal{T}^{(n)}_{t_2-t_1} 
		\left(F_2\cdot \ldots \mathcal{T}^{(n)}_{t_{m}-t_{m-1}}(F_m)\ldots \right)\right) 
		\right)\left(X_0^{(n)}\right)\, \nu ^{(n)}\left(dX_0^{(n)}\right)\\
		&\quad=\int\left(\mathcal{T}_{t_1}\left(F_1\cdot \mathcal{T}_{t_2-t_1}\left(F_2\cdot 
		\mathcal{T}_{t_{m}-t_{m-1}}(F_m)\ldots\right)\right)\right)\left(X_0\right)\, \nu 
		\left(dX_0\right)\\
		&\quad=\int F_1(X(t_1))\cdot\ldots\cdot F_m(X(t_m))\, dP,
	\end{align*}
	i.e. we have \eqref{rc:prop:2:eq:1}. The statement follows.
\end{proof}

\begin{corollary} 
	Let $X^{(n)}$, $X$, $P_n$, and $P$ denote the processes and associated distributions 
	introduced in \eqref{rc:intro:eq:1} and \eqref{rc:lem:1:eq:1}. Under the hypotheses of 
	Proposition \ref{rc:prop:1} we have $P_n\lni P$ weakly on $C_{C_0([0,1]; \R^d)}([0,\infty))$.
\end{corollary}
\begin{proof}
	Combining the relative compactness result of Proposition \ref{rc:prop:1} with the weak 
convergence of the finite dimensional distributions shown in Proposition \ref{rc:prop:2} we 
prove the statement, see also Theorem 3.7.8 (b) of \cite{EthierKurtz}.
\end{proof}







\section{Scalar quadratic variation}
\label{qv:section:1}

We are now interested in a criterion for the increase of the sequence 
$\lambda_i$, $i\in \N$, in order to guarantee the finiteness of the quadratic variation. 
Subsequently we will use the concept of uniform convergence on compact subsets of $[0,\infty)$ 
in probability (abbreviated ucp), see e.g. \cite{Protter} II.4 Definition p. 57. 

Let $W_i$, $i\in \N$, be a sequence of independent one-dimensional standard Wiener processes and 
let $G_i(0)\in \R$, $i\in \N$, such that $\sum_{i=1}^\infty |G_i(0)|S_i$ converges in $C_0([0,1]; 
\R^d)$. Then $G_i$, $i\in \N$, defined by 
\begin{align*}
	G_i(t)&:=e^{-t}G_i(0)+e^{-t}W_i\left(e^{2t}-1\right) \\ 
	&=e^{-t}G_i(0)+\int_{u=0}^te^{-u}\, dW_i\left(e^{2u}-1\right)+\int_{u=0}^tW_i\left(e^{2u}-1 
	\right)\, de^{-u},\quad t\ge 0,
\end{align*}
is a sequence of independent one-dimensional Ornstein-Uhlenbeck processes. Let the processes $X$, 
$Y$, and $Z$ on $t\in [0,\infty)$ be given by
\begin{align*} 
	X_t(s):=\sum_{i=1}^\infty G_i(\lambda_it)\cdot S_i(s), 
\end{align*}
\begin{align}
\label{eq:qv:ex:y}
	Y_t(s)=\sum_{i=1}^\infty \int_{u=0}^te^{-\lambda_iu}\, dW_i\left(e^{2\lambda_iu}-1\right) 
	\cdot S_i(s), 
\end{align} 
and 
\begin{align}
\label{eq:qv:ex:z}
	Z_t(s)=-\sum_{i=1}^\infty \int_{u=0}^t\lambda_ie^{-\lambda_iu}W_i\left(e^{2\lambda_iu}-1 
	\right)\, du\cdot S_i(s),\quad s\in [0,1]. 
\end{align} 
provided that the infinite sums \eqref{eq:qv:ex:y} and \eqref{eq:qv:ex:z} converge almost surely 
in $C_0([0,1]; \R^d)$. Recall also Lemma \ref{lem:rc:gi:2}. In addition denote by 
$A_t:=\sum_{i=1}^\infty e^{-\lambda_it}G_i(0)\cdot S_i$, $t\ge 0$, the non-random part of 
\begin{align*}
	X_t=Y_t+Z_t+A_t,\quad t\ge 0. 
\end{align*} 
We aim to analyze the quadratic variation of $X$. For this let $T>0$ be arbitrary and consider 
an arbitrary sequence of partitions of $[0,T]$,
\begin{align*}
&\tau^n=\{t_0^n,t_1^n,\ldots,t_{k(n)}^n: 0=t_0^n\le t_1^n\le \ldots \le t_{k(n)}^n=T\},\quad n\in \N,
\end{align*}
with $\lim_\nti |\tau^n|=0$. Here $|\cdot|$ denotes the mesh size, $|\tau|:=\max_j(t_{j}-t_{j-1})$. 
Let $\|\, \cdot\, \|$ denote the norm in either $C_0([0,1]; \R^d)$ or $L^1([0,1];\R^d)$. For 
$0\le  t \le T$ we introduce the ucp limit
\begin{align}
\label{eq:qv:lim:x}
		[X]_t:=\lim_{\nti}\sum_{j:t_j^n\le t}\left\| X_{t_j^n}-X_{t_{j-1}^n}
		\right\|^2,
	\end{align} 
	where we note that this limit depends on the choice of the norm. In the same way we define
	\begin{align}
		\label{eq:qv:lim:y}
		[Y]_t:=\lim_{\nti}\sum_{j:t_j^n\le t}\left\|Y_{t_j^n}-Y_{t_{j-1}^n}\right\|^2 
	\end{align} 	
and $[Z]_t:=\lim_{\nti}\sum_{j:t_j^n\le t}\left\|Z_{t_j^n}-Z_{t_{j-1}^n}\right\|^2$ provided that 
these limits exists ucp. For $t\in [0,T]$ introduce  
\begin{align*}
		[A]_t:=\lim_{\nti}\sum_{j:t_j^n\le t}\ \left\| A_{t_j^n}-A_{t_{j-1}^n}\right\|^2 
\end{align*} 
whenever this limit exists uniformly in $t\in [0,T]$. 
 
Let $\xi_i$, $i\in \N$, be a sequence of independent identically distributed standard normal 
random variables and define
\begin{align*}
	\theta:=2E\left[\left\|\sum_{i=1}^\infty\lambda_i^\hf \xi_iS_i\right\|^2\right]. 
\end{align*}

\begin{lemma}
\label{lem:qv:var:1}
Suppose that 
\begin{align}
\label{eq:qv:lambda:1}
\sum_{m=0}^\infty 2^{-\frac{m}{2}}\cdot \lambda_{d2^{m+1}}^{\hf} \cdot m^{\hf}<\infty.
\end{align}
(a) The sum \eqref{eq:qv:ex:y} converges almost surely in $C_0([0,1]; \R^d)$. Furthermore, 
the limit \eqref{eq:qv:lim:y} exists ucp on $t\in [0,\infty)$ for both norms, 
$C_0([0,1];\R^d)$ as well as $L^1([0,1];\R^d)$. It holds that $\theta<\infty$ 
and we have $[Y]_t=t\theta$, $t\in [0,\infty)$.\\
(b) For any $T>0$ we have for both norms, $C_0([0,1];\R^d)$ as well as $L^1([0,1];\R^d)$,
\begin{align*}
	\var\left(\sum_{j:t^n_j\le T}\left\|Y_{t_j^n}-Y_{t_{j-1}^n}\right\|^2 \right) 
	\le C \sum_{j:t^n_j\le T}(t^n_j-t^n_{j-1})^2
\end{align*}
for some $C>0$ independent of $n\in \N$ and $\tau^n$.
\end{lemma}
\begin{proof}
Let $T>0$. For the sake of clarity, below we will use the abbreviations $t_j^n\equiv t_j$, and 
\begin{align*}
	\Delta_{i,j}Y:=\int_{u=t_{j-1}}^{t_j}e^{-\lambda_iu}\, dW_i\left(e^{2\lambda_iu}-1\right). 
\end{align*} 
We observe that $\Delta_{i,j}Y$ is $N\left(0,2\lambda_i(t_j-t_{j-1})\right)$-distributed. Therefore
\begin{align*}
		E\left[\left\|\sum_{i=1}^\infty \left| 
		\Delta_{i,j}Y\right|S_i\right\|^4\right]		
		 \le d E\left[\left(\max_{0<i\le d}\left|\Delta_{i,j}Y\right| 
		+\sum_{m=0}^\infty 2^{-\frac{m+2}{2}}\max_{d2^m<i\le d2^{m+1}}\left|\Delta_{i,j}Y\right| 
		\right)^4\right].
\end{align*}
In the case of the $C$-norm the last line is obvious since taking the $\sup$-norm of 
$\sum_{i=1}^\infty \big|\Delta_{i,j}Y\big|S_i$ we just focus on one $s\in [0,1]$. We recall also 
that the $L^1$-norm of $S_i$ is bounded by the $C$-norm of $S_i$. Taking the fourth power and 
using the independence of the different Wiener processes it follows that we have to investigate 
terms of type
	\begin{align*}
	E\left[\max_{d2^m<i\le d2^{m+1}}\big|\Delta_{i,j}Y\big|^k\right],\quad k\in \{1,\ldots,4\}.
	\end{align*}
A known result from extreme value theory says that for a sequence $\xi_i$, $i\in \N$, of 
independent $N(0,1)$-distributed random variables we have $E\left[\max_{1\le i\le r} 
|\xi_i|^k\right]\le c_k (\ln r)^\frac{k}{2}$ for some $c_k>0$ independent of $r\ge 2$. For example see 
\cite{EmKlMi}, Example 3.3.29, and \cite{Resnick}, Proposition 2.1 (iii), where $a_n=c_n$ and $b_n=d_n$. 
Recalling that $\lambda_i$ is an increasing sequence it follows that
	\begin{align*}
	&E\left[\max_{d2^m<i\le d2^{m+1}}\big|\Delta_{i,j}Y\big|^k\right]\le d_k\ 
	\lambda_{d2^{m+1}}^{\frac{k}{2}} \cdot m^{\frac{k}{2}}\cdot\left(t_j-t_{j-1}\vphantom{l^1} 
	\right)^{\frac{k}{2}},
\end{align*}
$k\in \{1,\ldots,4\}$ for a suitable $d_k>0$ independent of $m$ and $\lambda_i$, $i\in \N$. 
Piecing everything together and keeping \eqref{eq:qv:lambda:1} in mind we obtain
\begin{align}
\notag
	&E\left[ \left(\max_{0<i\le d}\big|\Delta_{i,j}Y\big|+\sum_{m=0}^\infty 2^{-\frac{m+2}{2}}
	\max_{d2^m<i\le d2^{m+1}}\big|\Delta_{i,j}Y\big|\right)^4\right]\\
	\label{eq:rc:p2}
	&\quad\le C_1 \cdot  \left(\lambda_d^\hf+\sum_{m=0}^\infty 2^{-\frac{m}{2}}\cdot 
	\lambda_{d2^{m+1}}^{\frac{1}{2}} \cdot m^{\frac{1}{2}}\right)^4\cdot \left(t_j-t_{j-1}
	\vphantom{l^1}\right)^{2} 
\end{align}
for some $C_1>0$ independent of $\lambda_i$, $i\in \N$, $n\in \N$, and $\tau^n$. We get finiteness in \eqref{eq:rc:p2} for any partition $\tau^n$. Thus  $E\left[\left\|\sum_{i=1}^\infty \Delta_{i,j}Y S_i\right\|^2\right]$ is also finite. Applying Kolmogorov's inequality we obtain
	\begin{align}
		\notag
		&P\left(\sup_{t\le T}\left| \sum_{j:t_j\le t}\left(\left\|\sum_{i=1}^\infty 
		\Delta_{i,j}YS_i\right\|^2-E\left[\left\|\sum_{i=1}^\infty \Delta_{i,j}Y 
		S_i\right\|^2\right]\right)\right| \ge \eps\right) \\
		\notag
		&\quad \le\frac{1}{\eps^2}\sum_j \var\left(\left\|\sum_{i=1}^\infty \Delta_{i,j}Y 
		S_i\right\|^2\right)\le\frac{1}{\eps^2}\sum_j E\left[\left\|\sum_{i=1}^\infty \left| 
		\Delta_{i,j}Y\right|S_i\right\|^4\right]\\
		\label{eq:rc:p1}
		&\quad \le\frac{d}{\eps^2}\sum_j E\left[\left(\max_{0<i\le d}\left|\Delta_{i,j}Y\right| 
		+\sum_{m=0}^\infty 2^{-\frac{m+2}{2}}\max_{d2^m<i\le d2^{m+1}}\left|\Delta_{i,j}Y\right| 
		\right)^4\right].
	\end{align}
We note that the finiteness of the right-hand side implies 
that the sum \eqref{eq:qv:ex:y} converges almost surely in $C_0([0,1]; \R^d)$. Continuing from 
\eqref{eq:rc:p1} and using \eqref{eq:rc:p2} we get
 	\begin{align*}
		\notag
		&P\left(\sup_{t\le T} \left|\sum_{j:t_j\le t}\left\|\sum_{i=1}^\infty 
		\Delta_{i,j}YS_i\right\|^2-E\left[\left\|\sum_{i=1}^\infty \Delta_{i,j}YS_i 
		\right\|^2\right]\right|\ge \eps\right) \\
		\notag
		&\quad \le \frac{dC_1}{\eps^2}\cdot\left(\lambda_d^\hf+\sum_{m=0}^\infty 2^{-\frac{m}{2}} 
		\cdot \lambda_{d2^{m+1}}^{\frac{1}{2}} \cdot m^{\frac{1}{2}}\right)^4 \sum_{j:t_j\le t} 
		\left(t_j-t_{j-1}\vphantom{l^1}\right)^{2} \stack{|\tau|\to 0}{\lra} 0.
	\end{align*}
It follows that for any sequence of partitions $\tau^n$, $n\in \N$ we have convergence ucp. We 
also note that
\begin{align}
	\notag
	&\lim_{|\tau|\to 0} \sum_{j:t_j\le t} E\left[\left\|\sum_{i=1}^\infty 
	\Delta_{i,j}YS_i\right\|^2\right]\\
	\notag
	&\quad=\lim_{|\tau|\to 0}\sum_{j:t_j\le t} E\left[2\left(t_j-t_{j-1}\vphantom{l^1}\right) 
	\left\|\sum_{i=1}^\infty\frac{\lambda_i^\frac12\Delta_{i,j}Y}{\left(\var( 
	\Delta_{i,j}Y)\right)^\hf}S_i\right\|^2\right]\\ 
	\label{eq:rc:p3}
	&\quad= t\theta.
\end{align}
This shows (a). Part (b) follows from \eqref{eq:rc:p1}, \eqref{eq:rc:p2}, and 
\eqref{eq:qv:lambda:1}.
\end{proof}

\begin{lemma}
\label{lem:qv:var:2}
(a) Suppose that 
\begin{align}
\label{eq:rc:q1}
	\sum_{m=0}^\infty 2^{-\frac{m}{2}}\, \max_{d2^m+1< i\le d2^{m+1}}|G_i(0)|\cdot 
	\lambda_{d2^{m+1}}<\infty. 
\end{align} 
Then $\sum_{i=1}^\infty |G_i(0)|S_i$ converges in $C_0([0,1];\R^d)$, i.e. $A_t=\sum_{i=1}^\infty 
e^{-\lambda_it}G_i(0)\cdot S_i$, $t\ge 0$, is well-defined. Furthermore, the limit $[A]_t= 
\lim_{\nti}\sum_{j:t_j^n\le t}\, \| A_{t_j^n}-A_{t_{j-1}^n}\|^2$ exists uniformly in $t\in [0,T]$ 
for both norms, $C_0([0,1];\R^d)$ and $L^1([0,1];\R^d)$, and is constant zero on $[0,T]$. 
\medskip 

\noindent
(b) Suppose that 
\begin{align}
\label{eq:rc:q2}
\sum_{m=0}^\infty 2^{-\frac{m}{2}}\cdot \lambda_{d2^{m+1}} \cdot m^{\hf}<\infty. 
\end{align} 
Under this condition the sum \eqref{eq:qv:ex:z} converges almost surely in $C_0([0,1]; \R^d)$. Moreover, it holds that the 
limit $[Z]_t=\lim_{\nti}\sum_{j:t_j^n\le t}$ $\| Z_{t_j^n}-Z_{t_{j-1}^n}\|^2$ exists ucp on $t\in 
[0,\infty)$ for both norms, $C_0([0,1];\R^d)$ as well as $L^1([0,1];\R^d)$. Furthermore the limit is constant zero. 
\end{lemma}
\begin{proof}
Similar as in the proof of the previous lemma, below we will use the abbreviations $t_j^n\equiv 
t_j$, and $\Delta_{i,j}A:=G_i(0)e^{-\lambda_i t_j}-G_i(0)e^{-\lambda_i t_{j-1}}$. 
\medskip 

\noindent 
(a) We obtain 
	\begin{align} 
		\notag
		&\nquad\nquad\sum_{j:t_j\le T}\ \left\|\sum_{i=1}^\infty\Delta_{i,j}A\, S_i 
		\right\|^2 \\ 
		\notag
		&\nquad\le \sum_{j:t_j \le T}\left(\sum_{1\le i\le d}\big| 
		\Delta_{i,j}A\big|+\sum_{m=0}^\infty 2^{-\frac{m+2}{2}}\, \sum_{d2^m<i\le d2^{m+1}} 
		\big|\Delta_{i,j}A\big|\right)^2 \\ 
		\notag
		&\nquad\le \sum_{j:<t_j \le T}(t_j-t_{j-1})^2\left(d\cdot\max_{1\le i\le d}|G_i(0)| 
		\cdot\lambda_d+d\cdot\sum_{m=0}^\infty 2^{-\frac{m+2}{2}}\, \max_{d2^m+1< i\le d2^{m+1}} 
		|G_i(0)|\cdot\lambda_{d2^{m+1}}\right)^2 \\ 
		\label{eq:rc:p5}
		&\nquad\nquad\stack{|\tau|\to 0}{\lra} 0 
	\end{align} 
where we have used $|\Delta_{i,j}A|\le |G_i(0)|\, \lambda_i\cdot(t_j-t_{j-1})$ and the assumption 
that $\lambda_i>0$ is non-decreasing in $i\in \N$. We get part (a) of the lemma. \\ 
(b) Set $\Delta_{i,j}Z:=-\int_{u=t_{j-1}}^{t_j}\lambda_ie^{-\lambda_iu}W_i\left(e^{2\lambda_iu} 
-1\right)\, du$. With $\xi_{i,j}:=\sup_{u\in [t_{j-1},t_j]}{e^{-\lambda_iu}}\cdot W_i\left( 
e^{2\lambda_iu}-1\right)$ we have 
\begin{align*}
|\Delta_{i,j}Z|\le \xi_{i,j}\cdot\lambda_i\cdot (t_j-t_{j-1}).
\end{align*} 
By Lemma \ref{lem:rc:max:ou} we may find a constant $c>0$ independent of $j$ such that 
\begin{align*}
	E\left[\max_{ d2^m< i\le d2^{m+1}} \xi_{i,j}^k\right]\le c\left((\ln (\lambda_{d2^{m+1}}+1) 
	)^{\frac{k}{2}}+m^{\frac{k}{2}}\right),\quad m\in \{0,1,\ldots\},\ k=1,2,
\end{align*}
where for $\lambda_{d2^{m+1}} < 1$ we keep in mind the argumentation of Lemma \ref{lem:qv:var:1}.
We follow the ideas of the proof of Lemma \ref{lem:qv:var:1} (a) to obtain 
\begin{align}
	\notag
	&P\left(\sum_{j:t_j\le T}\ \left\|\sum_{i=1}^\infty \Delta_{i,j}Z\, S_i\right\|^2
	\ge\varepsilon\right)\le\frac1\varepsilon \sum_{j:t_j\le T}\ E\left[ 
	\left\|\sum_{i=1}^\infty \Delta_{i,j}Z\, S_i\right\|^2\right]\\
	\notag 
	&\quad \le\frac{d}{\eps}\ \sum_{j:t_j \le T} E\left[\bigg(\max_{0<i\le d}\big| 
	\Delta_{i,j}Z\big|+\sum_{m=0}^\infty 2^{-\frac{m+2}{2}}\max_{d2^m<i\le d2^{m+1}}\big| 
	\Delta_{i,j}Z\big|\bigg)^2\right]\\
	\notag 
	&\quad \le\frac{dD_1}{\eps}\sum_{j:t_j \le T} E\left[\bigg(\max_{0<i\le d}\xi_{i,j}  
	\cdot\lambda_d +\sum_{m=0}^\infty 2^{-\frac{m+2}{2}}\max_{d2^m<i\le d2^{m+1}} 
	\xi_{i,j}\cdot\lambda_{d2^{m+1}}\bigg)^2\right]\cdot (t_j-t_{j-1})^2  \\
	\label{eq:rc:p6}
	&\quad\le\frac{dD_2}{\eps}\ \bigg(\lambda_d+\sum_{m=0}^\infty 2^{-\frac{m+2}{2}}\cdot 
	\lambda_{d2^{m+1}}\cdot m^{\frac12}\bigg)^2\sum_{j:t_j \le T}(t_j-t_{j-1})^2\stack{|\tau| 
	\to 0}{\lra} 0 
\end{align} 
for some suitable constants $D_1>0$ and $D_2>0$ independent of $\lambda_i$, $i\in \N$, and 
$\tau^n$. En passant we have verified that the sum \eqref{eq:qv:ex:z} converges almost surely 
in $C_0([0,1]; \R^d)$. We have proved part (b) of the lemma.  
\end{proof}

\begin{proposition}
\label{qv:cla:prop:1} 
Assume \eqref{eq:rc:q1} and \eqref{eq:rc:q2}. Then $[X]_t=[Y]_t=t\theta$, $t\in [0,\infty)$ 
for both norms, $C_0([0,1];\R^d)$ and $L^1([0,1];\R^d)$. 
\end{proposition}
\begin{proof} 
Below we use the previous abbreviations $\Delta_{i,j}Y$, $\Delta_{i,j}Z$, and $\Delta_{i,j}A$. 
Let $T>0$. We have for both norms, in  $C_0([0,1];\R^d)$ and $L^1([0,1];\R^d)$,
	\begin{align}
		\notag
		&\left|\sum_{j:t_j<T}\left\|\sum_{i=1}^\infty  G_i(\lambda_it_{j})\cdot 
		S_i-\sum_{i=1}^\infty  G_i(\lambda_it_{j-1})\cdot S_i\right\|^2-\sum_{j:t_j<T} 
		\left\|\sum_{i=1}^\infty  \Delta_{i,j}YS_i\right\|^2\right|\\
		\notag
		&\quad\le 2\sum_{j:t_j<T}\left\|\sum_{i=1}^\infty \Delta_{i,j}YS_i\right\|\cdot 
		\left\|\sum_{i'=1}^\infty\left(\Delta_{i',j}ZS_{i'}+\Delta_{i',j}AS_{i'}\right)\right\| \\ 
		\label{eq:qv:terms:1}
		&\quad\quad +\sum_{j:t_j<T}\left\|\sum_{i=1}^\infty \left(\Delta_{i,j}Z 
		S_i+\Delta_{i,j}AS_i\right)\right\|^2.
	\end{align}
We study the expectation of the sum of the absolute values of the mixed terms in 
\eqref{eq:qv:terms:1} i.e.
	\begin{align*}
	\mathbf{E}(\tau):=\sum_{j:t_j<T}E\left[\left\| 
		\sum_{i=1}^\infty \Delta_{i,j}YS_i\right\|\cdot \left\|\sum_{i'=1}^\infty \left( 
		\Delta_{i',j}Z S_{i'}+\Delta_{i',j}AS_{i'}\right)\right\|\right],
	\end{align*}
	where the partition $\tau$ is given by $(t_j)_{j=1}^k$. By the Schwarz inequality applied as in 
\begin{align*} 
	\textstyle 
	E\left[\left\|\sum_i\right\|\cdot \left\|\sum_{i'}\right\|\right] 
	\le d^2\left( E\left[\left\|\sum_i\right\|_{C_0([0,1];\R^d)}^2\right]\right)^\hf\cdot 
	\left( E\left[\left\|\sum_{i'}\right\|_{C_0([0,1];\R^d)}^2\right]\right)^\hf 
\end{align*}
and calculations similar to \eqref{eq:rc:p2}, \eqref{eq:rc:p5}, and \eqref{eq:rc:p6} we verify 
\begin{align}
	\notag
	&\mathbf{E}(\tau)\le  D_3\left(\lambda_d^\hf+\sum_{m=0}^\infty 2^{-\frac{m}{2}}\cdot 
	\lambda_{d2^{m+1}}^\hf \cdot m^{\hf} \right)\times \\ 
	\notag
	&\quad\times\left(\big(\max_{0<i\le d}|G_i(0)|+1\big)\cdot\lambda_d+\sum_{m=0}^\infty 
	2^{-\frac{m}{2}}\big(\max_{d2^m<i\le d2^{m+1}}|G_i(0)|+m^{\hf}\big)\cdot \lambda_{d2^{m+1}} 
	\right)^2 \times\\
	&\quad\times \left(\sum_{j:t_j<T}(t_j-t_{j-1})^2\right)^\frac12
	\label{eq:rc:q3}
	\stack{|\tau|\to 0}{\lra} 0
\end{align}
for a suitable constant $D_3>0$. We obtain
\begin{align*}
	P\left(\sum_{j:t_j<T}\left\|\sum_{i=1}^\infty \Delta_{i,j}YS_i\right\|\cdot \left\|
	\sum_{i'=1}^\infty \left(\Delta_{i',j}Z S_{i'}+\Delta_{i',j}AS_{i'}\right)\right\| 
	\ge \eps \right)\le\frac{\mathbf{E}(\tau)}{\eps}.
\end{align*}
This and \eqref{eq:rc:q3} show that 
\begin{align}
	\label{eq:rc:ucp:2}
	&\sum_{j:t_j<t}\left\|\sum_{i=1}^\infty \Delta_{i,j}YS_i\right\|\cdot \left\|
	\sum_{i'=1}^\infty \left(\Delta_{i',j}Z S_{i'}+\Delta_{i',j}AS_{i'}\right)\right\| 
	\stack{|\tau|\to 0}{\lra} 0 
\end{align}
ucp on $t\in [0,\infty)$. Similarly, applying Lemma \ref{lem:qv:var:2} (a) and (b),
\begin{align}
	\label{eq:rc:ucp:2.5}
	\sum_{j:t_j<T}\left\|\sum_{i=1}^\infty\left(\Delta_{i,j}ZS_i+\Delta_{i,j}AS_i\right)\right\|^2
	\stack{|\tau|\to 0}{\lra} 0 
\end{align}
ucp on $t\in [0,\infty)$. Relations \eqref{eq:rc:ucp:2}, \eqref{eq:rc:ucp:2.5}, and 
\eqref{eq:qv:terms:1} prove the statement.
\end{proof}

For $r\in \R$ let $[r]$ denote the largest integer that does not exceed $r$. For $0\le a<\delta$  
and $t>0$ define 
\begin{align*}
	&\langle Y\rangle_a\equiv \langle Y\rangle_{a;t}(\delta)
	:=\sum_{k=1}^{\left[\frac{t-a}{\delta}\right]+1}\left(\|Y_{a+k\delta}-Y_{a+(k-1)\delta}\|^2
	-E\left[\|Y_{a+k\delta}-Y_{a+(k-1)\delta}\|^2\vphantom{l^1}\right]\vphantom{\dot{f}}\right) 
\end{align*} 
where as above $\|\, \cdot\, \|$ is the norm in either $C_0([0,1]; \R^d)$ or $L^1([0,1];\R^d)$. 

\begin{lemma}
\label{qv:reg:lem:1}
Suppose \eqref{eq:qv:lambda:1}. We have 
\begin{align} 
\label{eq:rc:ucp:3}
	\lim_{\delta\to 0}\sup_{0\le a<\delta}E\left[\Big(\langle Y\rangle_{a;t}(\delta)\Big)^2 
	\right]=0. 
\end{align}
\end{lemma}
\begin{proof} 
This is a particular case of Lemma \ref{lem:qv:var:1} (b).
\end{proof}

\begin{prop}
\label{qv:reg:prop:1}
Assume \eqref{eq:rc:q1} and \eqref{eq:rc:q2}.  Then we have
\begin{align*}
	\frac{1}{\delta} \int_0^t \left\|X_{s+\delta}-X_s\right\|^2 \, ds \stack 
	{\delta\to 0}{\lra}t\theta \quad \text{ucp on } t\in [0,\infty). 
\end{align*}
\end{prop}
\begin{proof} 
Let us use the same notation as in Lemma \ref{qv:reg:lem:1}. We obtain for all $t\in [0,\infty)$ 
and $\eps>0$
\begin{align}
	\notag
	&P\Bigg(\Bigg| \int_0^t \|Y_{s+\delta}-Y_s\|^2-E\left[\|Y_{s+\delta}-Y_s 
	\|^2\right] \, ds\Bigg|\ge \delta\eps\Bigg)\\
	\label{qv:reg:prop:1:eq:1}
	&\quad=P\Bigg(\left|\int_0^\delta \langle Y\rangle_a\, da\right| \ge \delta 
	\eps\Bigg)\le \frac{1}{\eps^2\delta^2}E\left[\left(\int_0^\delta \langle Y\rangle_a \, da\right)^2\right]. 
\end{align}
Furthermore, we have
\begin{align}
	\label{qv:reg:prop:1:eq:2}
	&E\left[\left(\int_0^\delta \langle Y\rangle_a\, da \right)^2\right]\le \delta 
	\int_0^\delta E\left[\langle Y\rangle_a^2\right]\, da\le \delta^2 
	\sup_{0\le a<\delta} E\left[\Big(\langle Y\rangle_{a;t}(\delta)\Big)^2\right].
\end{align}
As in the proof of Lemma \ref{lem:qv:var:1} (a), limit \eqref{eq:rc:p3}, it follows that 
$\frac1\delta E\left[\|Y_{s+\delta}-Y_s\|^2\right]\stack{\delta\to 0}{\lra}\theta$, 
$s\in [0,\infty)$. 
Relations \eqref{qv:reg:prop:1:eq:1} and \eqref{qv:reg:prop:1:eq:2} together with Lemma 
\ref{qv:reg:lem:1} show that $\frac1\delta\int_0^t \|Y_{s+\delta}-Y_s\|^2\, ds\stack{\delta 
\to 0}{\lra}\theta t$ in probability. Thanks to the fact that $[0,\infty)\ni t\to \frac1\delta 
\int_0^t \|Y_{s+\delta}-Y_s\|^2\, ds$ is increasing for all $\delta$, Lemma 3.1 in 
\cite{RussoVallois1999} implies even
\begin{align} 
\label{qv:reg:prop:1:eq:3}
	\frac1\delta\int_0^t \|Y_{s+\delta}-Y_s\|^2\, ds\stack{\delta\to 0}{\lra}\theta t\quad 
	\text{ucp on } [0,\infty). 
\end{align} 

Recalling $A_t=\sum_{i=1}^\infty e^{-\lambda_it}G_i(0)\cdot S_i$ and defining $(Z+A)_t:=Z_t+A_t$, 
$t\ge 0$, as in \eqref{eq:qv:terms:1} we may conclude 
\begin{align}
	\notag
	&\left|\frac1\delta\int_0^t\left\|X_{s+\delta}-X_s\right\|^2\, ds-\frac1\delta\int_0^t 
	\left\|Y_{s+\delta}-Y_s\right\|^2\, ds\right| \\ 
	\label{qv:prop:1:eq:1}
	&\quad\le\frac{2}{\delta}\int_0^t\left\|Y_{s+\delta}-Y_s\right\|\left\|(Z+A)_{s+\delta} 
	-(Z+A)_s\right\|\, ds+\frac1\delta\int_0^t\left\|(Z+A)_{s+\delta}-(Z+A)_s\right\|^2\, ds \\ 
	\notag
	&\quad=\frac{2}{\delta}\int_0^\delta\sum_{k=1}^{\left[ \frac{t-a}{\delta}\right]+1}
	\left\|Y_{a+k\delta}-Y_{a+(k-1)\delta}\right\|\left\|(Z+A)_{a+k\delta}-(Z+A)_{a+(k-1)\delta} 
	\right\|\, da \\ 
	\notag
	&\qquad+\frac1\delta\int_0^\delta\sum_{k=1}^{\left[ \frac{t-a}{\delta}\right]+1}\left\| 
	(Z+A)_{a+k\delta}-(Z+A)_{a+(k-1)\delta}\right\|^2\, da. 
\end{align} 
The two items on the right hand side tend to zero ucp on $t\in [0,\infty)$ by following 
the method of \eqref{qv:reg:prop:1:eq:1}-\eqref{qv:reg:prop:1:eq:3} with Markov's instead of 
Chebychev's inequality and applying the estimates of the proof of Proposition \ref{qv:cla:prop:1}. 
The claim follows. 
\end{proof}








\section{Tensor quadratic variation} 
\label{tv:section:1}

Let us use the notation of the previous section. In addition, let $\|\cdot\|_\pi$ denote the 
projective norm with respect to the tensor product $L^1([0,1];\R^d)\otimes L^1([0,1];\R^d)$, 
i.e. the norm in the Banach space $\lptp$. We recall that the algebraic tensor product $L^1([0,1];\R^d)\otimes L^1([0,1];\R^d)$ denotes all elements of the form 
$\sum_{i=1}^n x_i\otimes y_i$ where $x_i,y_i\in L^1([0,1];\R^d)$, $n\in \N$. For an element $u\in L^1([0,1];\R^d)\otimes L^1\left([0,1];\R^d\right)$ the projective norm is defined by
\begin{align*}
	\|u\|_\pi:=\inf \left\{\sum_{i=1}^n \|x_i\|_{L^1}\|y_i\|_{L^1}:u=\sum_{i=1}^n x_i\otimes y_i,\ \ x_i,y_i\in L^1([0,1];\R^d)\right\}.
\end{align*}
The space $\lptp$ is the completion of $L^1([0,1];\R^d)\otimes L^1([0,1];\R^d)$ with respect to the projective norm.
Recall that there is an isometric isomorphism $i:\lptp\to L^1([0,1]\times[0,1];\R^{d^2})$ 
given by 
\begin{align*}
(i\circ f\otimes g)(u,v)=(i\circ f\ptp g)(u,v) :=f(u)\otimes g(v),\quad f,g\in L^1([0,1];\R^d),\ 
(u,v)\in [0,1]^2. 
\end{align*} 
The reference measure $\mu$ on $([0,1]^2,{\cal B}([0,1]^2))$ is the Lebesgue measure. 
This also says that $f(u)\otimes g(v)$ is identified with $\left((f(u))_i\, (g(v))_j 
\right)_{i,j=1\ldots d}\, $. Here $\R^{d^2}$ is endowed with the norm $\|{\bf x}\|_{1,1} 
:=\sum_{i,j=1}^d|x_{ij}|$ where ${\bf x}=(x_{11},\ldots ,x_{dd})$, $x_{11},\ldots ,x_{dd}\in\R$. 
Similarly for the norm in $\R^d$ we denote $\|{\bf y}\|_{1}:=\sum_{i=1}^d|y_{i}|$, where ${\bf y}=(y_{1},\ldots ,y_{d})$, 
$y_{1},\ldots ,y_{d}\in\R$. We also introduce the notation $x^{\otimes^2} \equiv x\otimes x$ which will be used below. 
\medskip

In this section we aim to determine the tensor valued quadratic variation
\begin{align*}
	[X]^\otimes_t:=\lim_{\delta\to 0} \int_0^t \frac{(X_{u+\delta}-X_u)\otimes (X_{u+\delta} 
	-X_u)}{\delta} \,du
\end{align*}
in the ucp sense with respect to the $\pi$-norm. Below we take advantage of \cite{Girolami2014}, 
and \cite{GirolamiRusso2014}, however we would also like to refer to the classical work of \cite{MetivierPellaumail}. Let $\xi_i$, $i\in \N$, be independent 
standard normal random variables and define
\begin{align}
	\label{tv:eq:1}
	\Theta:=2E\left[\left(\sum_{i=1}^\infty \lambda_i^\hf \xi_i S_i \right)\otimes \left( 
	\sum_{i'=1}^\infty \lambda_{i'}^\hf \xi_{i'} S_{i'} \right)\right]
\end{align} 
provided that this expression exists in $\lptp$. To ensure compatibility with \cite{Girolami2014}, 
\cite{GirolamiRusso2014}, and \cite{JansonKaijser2015} all integrals with Banach space valued 
integrands, as for example in the definitions of $[X]^\otimes$ and $\Theta$, are Bochner integrals. 
\begin{prop}
\label{tv:prop:1} 
Suppose \eqref{eq:rc:q1} and \eqref{eq:rc:q2}, i.e. 
\begin{align*}
\sum_{m=0}^\infty 2^{-\frac{m}{2}}\cdot \lambda_{d2^{m+1}}\cdot\left(\max_{d2^m+1< i\le d2^{m+1}} 
|G_i(0)|+ m^{\hf}\right)<\infty. 
\end{align*} 
Then $\Theta$ is well-defined by \eqref{tv:eq:1} and we have in the norm of $\lptp$ 
\begin{align*}
	\frac{1}{\delta} \int_0^t \left(X_{s+\delta}-X_s\right)\otimes \left(X_{s+\delta}-X_s\right) 
	\, ds \stack {\delta\to 0}{\lra} t\Theta \quad \text{ucp on } t\in [0,\infty). 
\end{align*}
\end{prop}
\begin{proof} 
Using the independence of the $\xi_i$, $i\in \N$, we obtain 
\begin{align*} 
	&E\left[\left\|i\circ\left(\sum_{i=1}^\infty \lambda_i^\hf \xi_i S_i \right)\otimes \left( 
	\sum_{i'=1}^\infty \lambda_{i'}^\hf\xi_{i'} S_{i'} \right)\right\|_{L^1([0,1]^2;\R^{d^2})} 
	\right]\le \left(\sum_{i=1}^\infty \lambda_i^\hf\left(E[\xi_i^2]\right)^\hf\|S_i\|_{L^1([0,1]; 
	\R^d)} \right)^2 \\ 
	&\quad\le\left(\sum_{i=1}^d\lambda_i^\hf+\sum_{m=0}^\infty 2^{-\frac{m+4}{2}} 
	\lambda_{d2^{m+1}}^\hf\right)^2 <\infty,
\end{align*} 
where the second line follows from \eqref{eq:rc:q2}. Thus the existence and representation of $\Theta$ defined in \eqref{tv:eq:1} follows from 
\cite{JansonKaijser2015}, Theorem 10.2.

Without further reference, we will use the inequality $P\left(\sum_i \left|\zeta_i\right|\ge a 
\right)\le \sum_i P\left(\left|\zeta_i\right|\ge b_ia\right)$ several times in the proof. Here 
the $\zeta_i$ are arbitrary random variables, $b_i> 0$ with $\sum_i b_i=1$, and $a>0$.
\\[0.3cm]
\textit{Step 1: }
Let us introduce some of the important objects in the proof. First we recall that $X_t= 
Y_t+Z_t+A_t$, $t\ge 0$, where the individual items $Y$, $Z$, and $A$ are defined in the
beginning of Section \ref{qv:section:1}. Let
\begin{align*}
	\eta_t(\delta):=\frac{1}{\delta}\int_0^\delta (Y_s-Y_0)^{\otimes^2}\, ds 
	+\frac{1}{\delta}\int_0^{t}\left(Y_{(s+\delta)\wedge t}-Y_{s}\right)^{\otimes^2} \, ds 
	-t\Theta,\quad t\ge\delta\ge 0.
\end{align*} 
Among other things, this says 
\begin{align} 
	\notag 
	&\frac1\delta\int_0^{t}\left(Y_{s+\delta}-Y_{s}\right)^{\otimes^2}\, ds -t\Theta
	=\eta_t(\delta) +\frac1\delta\left(-\int_0^\delta (Y_s-Y_0)^{\otimes^2}\, ds
	+\int_{t-\delta}^t\left(Y_{s+\delta}-Y_t\right)^{\otimes^2} \, ds \right. \\
	\notag
	&\qquad+\left.\int_{t-\delta}^t\left(Y_{s+\delta}-Y_t\right)\otimes\left(Y_t-Y_s\right)\, ds
	+\int_{t-\delta}^t\left(Y_t-Y_s\right)\otimes \left(Y_{s+\delta}-Y_t\right)\, ds\right) \\
	\label{tv:prop:1:eq:-1} 
	&\quad=:\eta_t(\delta)+\zeta_t(\delta)\equiv \eta_t+\zeta_t. 
\end{align} 
We look at the second item of the right-hand side. 
Taking into consideration the above isometry and Fubini's theorem we get for $T\ge \delta$ and 
$\eps>0$ 
\begin{align} 
	\label{tv:prop:1:eq:0}
	&P\left(\sup_{0\le t\le T}\left\|\delta \zeta_t\right\|_\pi \ge \frac14\delta\eps\right)
	\le P\left(\sup_{0\le t\le T,\ 0\le\gamma\le\delta}\left\|Y_{t+\gamma}-Y_t 
	\right\|_{L^1([0,1];\R^d)}^2\ge\frac{1}{16}\eps\right)=:g(\delta;\eps).
\end{align} 
\textit{Step 2: }
We examine $g(\delta;\eps)$. According to L\'evy's characterization of Brownian motion we have
\begin{align*}
	Y_{t+\gamma}-Y_t=\sum_{i=1}^\infty \left((2\lambda_i)^\hf V_i(t+\gamma)S_i-(2\lambda_i)^\hf 
	V_i(t)S_i\right), 
\end{align*}
where $V_i$, $i\in \N$, are independent one-dimensional Wiener processes. 
We obtain 
\begin{align}
	\label{tv:prop:1:eq:7}
	g(\delta;\eps)&\le\sum_{i=1}^\infty 
	P\left(\sup_{0\le t\le T,\ 0\le\gamma\le\delta} \|S_i\|_{L^1}(2\lambda_i)^\hf 
	|V_i(t+\gamma)-V_i(t)| \ge \frac{b_i}{4}\eps^\hf\right)
\end{align}
where we choose  
\begin{align}
	\label{tv:prop:1:eq:8}
	b_i:=\frac{4(\ln (i+1))^\hf\cdot (2\lambda_i)^\hf\|S_i\|_{L^1}}{4\sum_{i'=1}^\infty( 
	\ln (i'+1))^\hf\cdot(2\lambda_{i'})^\hf\|S_{i'}\|_{L^1}}\equiv\frac{4(\ln (i+1))^\hf 
	\cdot(2\lambda_i)^\hf\|S_i\|_{L^1}}{D} \, ,\quad i\in {\mathbb N}.
\end{align} 
We note that the denominator $D$ is finite by $\|S_i\|_{L^1}\equiv\|S_i\|_{L^1([0,1];\R^d)} 
=2^{-\frac{3m-4}{2}}$ if $d2^m+1\le i\le d2^{m+1}$ for $m\in \{0,1,2,\ldots\}$ and hypothesis 
\eqref{eq:rc:q2}. Next we use the subsequent inequality which follows from Lemma 2.1 of 
\cite{ChenCsorgo2001} by choosing $\mu=1$ and scaling. There exists $C>0$ independent of $v$ 
and $\delta$ such that 
\begin{align*} 
P\left(\sup_{0\le t\le T,\ 0\le\gamma\le\delta}|V_i(t+\gamma)-V_i(t)|\ge v\delta^\hf 
\right)\le \frac{C}{\delta}\exp\left\{-\frac{v^2}{3}\right\}\, ,\quad v>0.  
\end{align*}
We obtain
 \begin{align}
	\label{tv:prop:1:eq:9}
 	&P\left(\sup_{0\le t\le T,\ 0\le\gamma\le\delta}(2\lambda_i)^\hf \|S_i\|_{L^1}\cdot 
	|V_i(t+\gamma)-V_i(t)|\ge\frac{b_i}{4}\eps^\hf\right)\le \frac{C}{\delta}\exp\left\{ 
	-\frac{\ln (i+1)\cdot\eps}{3D^2\delta} \right\} 
 \end{align}
for some $C>0$ independent of $\eps$, $i$, and $\delta$. Applying \eqref{tv:prop:1:eq:8} and 
\eqref{tv:prop:1:eq:9} to \eqref{tv:prop:1:eq:7} yields
\begin{align}
	\label{tv:prop:1:eq:12}
	P\left(\sup_{0\le t\le T,\ 0\le\gamma\le\delta}\left\|Y_{t+\gamma}-Y_t\right\|_{L^1([0,1]; 
	\R^d)}\ge\frac{1}{4}\eps^\hf\right)\le\frac{C}{\delta}\sum_{i=1}^\infty\exp\left\{-\frac{\ln 
	(i+1)\cdot\eps}{3D^2\delta}\right\}.
\end{align}
It follows that 
\begin{align} 
	\label{tv:prop:1:eq:5}
	g(\delta;\eps) \stack{\delta\to 0}{\lra}0
\end{align} 
for all $\eps>0$. Using similar arguments we get 
\begin{align}
	\notag
	&P\left( \sup_{0\le t\le \delta}\left\|\int_0^{t}\left(Y_{s+\delta}-Y_{s}\right)^{\otimes^2} 
	\, ds -\delta t\Theta\right\|_\pi\ge \frac12\delta\eps\right)\\
	\label{tv:prop:1:eq:6}
	&\qquad\le P\left( \sup_{0\le s,t\le \delta}\left\|Y_{t}-Y_{s}\right\|_{L^1([0,1];\R^d)}^2+ 
	\delta\left\|\Theta\right\|_\pi \ge\frac12 \eps\right)=:h(\delta;\eps) \stack{\delta\to 0} 
	{\lra}0.
\end{align}
\textit{Step 3: }
Taking our attention back to $\eta_t$, for $t>u\ge\delta$ we note that
\begin{align*}
	&\eta_t=\frac{1}{\delta}\int_0^\delta (Y_s-Y_0)^{\otimes^2}\, ds 
	+\frac{1}{\delta}\int_0^{u-\delta} \left(Y_{s+\delta}-Y_{s}\right)^{\otimes^2} \, ds 
	+\frac{1}{\delta}\int_{u-\delta}^u (Y_u-Y_s)^{\otimes^2}\, ds \\ 
	&\qquad+\frac{1}{\delta}\int_{u-\delta}^u\left(Y_{(s+\delta)\wedge t}-Y_{u}\right)^{\otimes^2} 
	\, ds +\frac{1}{\delta}\int_{u-\delta}^u\left(Y_{(s+\delta)\wedge t}-Y_{u}\right)\otimes (Y_u 
	-Y_s)\, ds \\
	&\qquad+\frac{1}{\delta}\int_{u-\delta}^u(Y_u-Y_s)\otimes \left(Y_{(s+\delta)\wedge t}-Y_{u} 
	\right)\, ds+\frac{1}{\delta}\int_{u}^t\left(Y_{(s+\delta)\wedge t}-Y_{s}\right)^{\otimes^2}\, ds 
	- t\Theta \\ 
	&\quad=\eta_u+\frac{1}{\delta}\int_{u-\delta}^u\left(Y_{(s+\delta)\wedge t}-Y_{u}\right)^{ 
	\otimes^2}\, ds+\frac{1}{\delta}\int_{u-\delta}^u\left(Y_{(s+\delta)\wedge t}-Y_{u}\right) 
	\otimes (Y_u-Y_s)\, ds \\
	&\qquad+\frac{1}{\delta}\int_{u-\delta}^u (Y_u-Y_s)\otimes\left(Y_{(s+\delta)\wedge t}-Y_{u} 
	\right) \, ds+\frac{1}{\delta}\int_{u}^t\left(Y_{(s+\delta)\wedge t}-Y_{s}\right)^{\otimes^2} 
	\, ds - (t-u)\Theta. 
\end{align*}
By the definition of $\Theta$ and the above isomorphism it follows that $E[\eta_t|\F_u]= 
E[\eta_t-\eta_u|\F_u]+\eta_u=\eta_u$, $t>u\ge\delta$. In other words, $\eta_t$, $t\ge \delta$, 
is a martingale. For well-definiteness see again \cite{JansonKaijser2015}, Theorem 10.2.
\medskip

Using \eqref{tv:prop:1:eq:-1}, \eqref{tv:prop:1:eq:0}, \eqref{tv:prop:1:eq:6}, and Doob's 
inequality we obtain for $T\ge \delta$ and $\eps>0$
\begin{align}
	\notag
	&P\left( \sup_{0\le t\le T}\left\|\int_0^{t}\left(Y_{s+\delta}-Y_{s}\right)^{\otimes^2}\, ds 
	-\delta t\Theta\right\|_\pi\ge \delta\eps\right)\\
	\notag
	&\quad\le P\left( \sup_{0\le t\le \delta}\left\|\int_0^{t}\left(Y_{s+\delta}-Y_{s} 
	\right)^{\otimes^2}\, ds -\delta t\Theta\right\|_\pi\ge\frac12\delta\eps\right) \\
	\notag
	&\qquad+P\left( \sup_{\delta\le t\le T}\left\|\int_0^{t}\left(Y_{s+\delta}-Y_{s} 
	\right)^{\otimes^2}\, ds -\delta t\Theta\right\|_\pi\ge\frac12\delta\eps\right)\\
	\notag 
	&\quad\le h(\delta;\eps)+P\left(\sup_{\delta\le t\le T}\left\|\delta\eta_t\right\|_\pi\ge 
	\frac14\delta\eps\right)+g(\delta;\eps) \\ 
	\notag
	&\quad\le h(\delta;\eps)+\frac{16}{\eps^2\delta^2}E\left[\left\|\delta\eta_T\right\|_\pi^2 
	\right]+g(\delta;\eps) \\ 
	\label{tv:prop:1:eq:10}
	&\quad\le \frac{16}{\eps^2\delta^2} E\left[\left\|\left(\int_0^{T} \left(Y_{s+\delta}-Y_{s} 
	\right)^{\otimes^2} \, ds-\delta T\Theta\right)-\delta\zeta_T\right\|_\pi^2\right] 
	+g(\delta;\eps)+h(\delta;\eps). 
\end{align}
Next we introduce
\begin{align*}
	\langle Y\rangle_a^\otimes\equiv \langle Y\rangle_{a,T}^\otimes(\delta)
	&:=\sum_{k=1}^{\left[\frac{T-a}{\delta}\right]+1} \left[\left(Y_{a+k\delta}-Y_{a+(k-1)\delta} 
	\right)^{\otimes^2}-\delta\Theta \right] 
\end{align*}
and note that $\langle Y\rangle_a^\otimes$ is almost surely an element of $\in\lptp$. From 
\eqref{tv:prop:1:eq:10} we obtain 
\begin{align}
	\notag
	&P\left( \sup_{0\le t\le T}\left\|\int_0^{t}\left(Y_{s+\delta}-Y_{s}\right)^{\otimes^2}\, ds 
	-\delta t\Theta\right\|_\pi\ge \delta\eps\right)\\
	\notag
	&\quad= \frac{32}{\eps^2\delta^2} E\left[\left\| \int_0^\delta \langle Y\rangle_a^\otimes 
	\, da\right\|_\pi^2\right]+\frac{32}{\eps^2\delta^2} E\left[\left\|\delta\zeta_T\right\|_\pi^2 
	\right]+g(\delta;\eps)+h(\delta;\eps)\\
	\label{tv:prop:1:eq:1}
	&\quad\le \frac{32}{\delta\eps^2}\int_0^\delta E\left[\left\|\langle Y\rangle_a^\otimes 
	\right\|_\pi^2\right]\, da+\frac{32}{\eps^2\delta^2} E\left[\left\|\delta\zeta_T\right\|_\pi^2 
	\right]+g(\delta;\eps)+h(\delta;\eps).
\end{align} 
\textit{Step 4: } 
In this step we examine the expression $E\left[\left\|\langle Y\rangle_a^\otimes \right\|_\pi^2\right]$. We do this by first applying the 
isometry $i:\lptp\to L^1([0,1]\times[0,1];\R^{d^2})$, followed by the Schwarz inequality, and Fubini's theorem 
to obtain
\begin{align}
	\notag
	&\nquad E\left[\left\| \langle Y\rangle_a^\otimes \right\|_\pi^2\right]
	= E\left[\int_{([0,1]^2)^2}\left\| \left(i\circ \langle Y\rangle^\otimes_a\right)(u,v) 
	\right\|_{1,1}\cdot\left\|\left(i\circ\langle Y\rangle^\otimes_{a'}\right)(u',v')\right\|_{1,1}\, d 
	\mu(u,v)d\mu(u',v')\right]\\
	\notag
	&=\int_{([0,1]^2)^2} E\left[ \left\|\left(i\circ \langle Y\rangle^\otimes_a\right)(u,v) 
	\right\|_{1,1}\cdot\left\|\left(i\circ\langle Y \rangle^\otimes_{a'}\right)(u',v')\right\|_{1,1} 
	\vphantom{\dot{f}}\right]\, d\mu(u,v)d\mu(u',v')\\
	\label{tv:prop:1:eq:2}
	&\le\left(\int_{[0,1]^2} \left(E\left[\left\|\left(i\circ \langle Y\rangle^\otimes_a 
	\right)(u,v)\vphantom{\dot{f}}\right\|_{1,1}^2\right] \right)^\hf d\mu(u,v)\right)^2. 
\end{align} 
Taking into consideration the independence of the increments of $Y$ we get
\begin{align}
	\notag
	&E\left[\left\| \left(i\circ \langle Y\rangle^\otimes_a\right)(u,v)\right\|_{1,1}^2 \right]
	=\sum_{k=1}^{\left[ \frac{t-a}{\delta}\right]+1} E\left[\left\|\left(i\circ\left(\left( 
	Y_{a+k\delta}-Y_{a+(k-1)\delta}\right)^{\otimes^2}-\delta\Theta\right)\right)\left(u,v\right) 
	\right\|_{1,1}^2\right]\\
	\notag
	&\quad=\sum_{k=1}^{\left[\frac{t-a}{\delta}\right]+1} E\left[\vphantom{\left.\vphantom{\dot{f}}\right\|_{1,1}^2}\left\|\vphantom{\dot{f}} 
	\left(Y_{a+k\delta}-Y_{a+(k-1)\delta}\right)(u)\otimes\left(Y_{a+k\delta}-Y_{a+(k-1)\delta} 
	\right)(v) \right.\right.\\
	\notag
	&\qquad- \left.\left.E\left[\left(Y_{a+k\delta}-Y_{a+(k-1)\delta}\right)(u)\otimes\left( 
	Y_{a+k\delta}-Y_{a+(k-1)\delta}\right)(v)\right]\vphantom{\dot{f}}\right\|_{1,1}^2\right] \\
	\notag
	&\quad\le\sum_{k=1}^{\left[ \frac{t-a}{\delta}\right]+1} E\left[\left\|\vphantom{\dot{f}} 
	\left(Y_{a+k\delta}-Y_{a+(k-1)\delta}\right)(u)\otimes\left(Y_{a+k\delta}-Y_{a+(k-1)\delta} 
	\right)(v)\vphantom{\dot{f}}\right\|_{1,1}^2\right]. 
\end{align}
It follows from the Schwarz inequality that
\begin{align}
	\notag
	&E\left[\left\| \left(i\circ \langle Y\rangle^\otimes_a\right)(u,v)\right\|_{1,1}^2 \right]
	\\
	\label{tv:prop:1:eq:3}
	&\quad\le\sum_{k=1}^{\left[\frac{t-a}{\delta}\right]+1}\left( E\left[\left\|\left( 
	Y_{a+k\delta}-Y_{a+(k-1)\delta}\right)(u)\right\|_1^4\right]\right)^\hf \cdot \left( 
	E\left[\left\|\left(Y_{a+k\delta}-Y_{a+(k-1)\delta}\right)(v)\right\|_1^4\right]\right)^\hf.
\end{align}
We recall from the proof of Lemma \ref{lem:qv:var:1} that $Y_{a+k\delta}-Y_{a+(k-1)\delta}= 
\sum_{i=1}^\infty\Delta_{i,a+k\delta}Y\cdot S_i$ where $\Delta_{i,a+k\delta}$ is $N(0,2\lambda_i 
\delta)$-distributed, $i\in \N$, and thus by relation \eqref{eq:rc:p2} and hypothesis 
\eqref{eq:rc:q2} 
\begin{align}
\label{tv:prop:1:eq:4}
	E\left[\left\|\left(Y_{a+k\delta}-Y_{a+(k-1)\delta}\right)(u)\right\|_1^4\right]\le C\delta^2 
\end{align}
for some $C>0$ independent of $u\in [0,1]$. By \eqref{tv:prop:1:eq:-1} and similar calculations 
it follows that 
\begin{align}
\label{tv:prop:1:eq:13}
	 E\left[\left\| \zeta_T\right\|_\pi^2\right] \stack{\delta\to 0} {\lra}0.
\end{align}
Now \eqref{tv:prop:1:eq:1} together with 
\eqref{tv:prop:1:eq:5}, \eqref{tv:prop:1:eq:6}, \eqref{tv:prop:1:eq:13}, and 
\eqref{tv:prop:1:eq:2}-\eqref{tv:prop:1:eq:4} imply 
\begin{align*}
	\int_0^t \left(Y_{s+\delta}-Y_{s}\right)^{\otimes^2}\, ds \stack{\delta\to 0}{\lra}t\Theta 
	\quad \text{ucp on } [0,\infty) 
\end{align*}
in the norm of $\lptp$. 
\medskip 

\noindent
\textit{Step 5: } 
It remains to show that
\begin{align*}
	\left\|\frac1\delta\int_0^t \left(X_{a+\delta}-X_{a}\right)^{\otimes^2}\, da-\frac1\delta 
	\int_0^t \left(Y_{a+\delta}-Y_{a}\right)^{\otimes^2}\, da\right\|_\pi \stack{\delta\to 0} 
	{\lra} 0\quad \text{ucp on } [0,\infty). 
\end{align*}
Using the relation $X_t=Y_t+Z_t+A_t$, as defined in Section \ref{qv:section:1}, we obtain
\begin{align*}
	&\nquad\left\|\frac1\delta\int_0^t \left(X_{s+\delta}-X_{s}\right)^{\otimes^2}-\left(Y_{s+\delta} 
	-Y_{s}\right)^{\otimes^2}\, ds\right\|_\pi\\
	&\nquad=\left\|\frac1\delta\int_0^t \left(Y_{s+\delta}+Z_{s+\delta}+A_{s+\delta}-Y_{s}-Z_s-A_s 
	\right)^{\otimes^2}-\left(Y_{s+\delta}-Y_{s}\right)^{\otimes^2}\, ds\right\|_\pi\\
	&\nquad=\left\|\frac1\delta\int_0^t \left(Y_{s+\delta}-Y_s\right)\otimes \left(Z_{s+\delta} 
	+A_{s+\delta}-Z_s-A_s\right)+ \left(Z_{s+\delta}+A_{s+\delta}-Z_s-A_s\right)\otimes 
	\left(Y_{s+\delta}-Y_s\right)\right.\\
	&\quad\left.+\left(Z_{s+\delta}+A_{s+\delta}-Z_s-A_s\right)^{\otimes^2} \, ds\right\|_\pi.
\end{align*}
Using the isometry of $i:\lptp\to L^1([0,1]^2;\R^{d^2})$ and the triangle inequality we 
get the estimate   
\begin{align*}
	&\left\|\frac1\delta\int_0^t \left(X_{s+\delta}-X_{s}\right)^{\otimes^2}-\left(Y_{s+\delta} 
	-Y_{s}\right)^{\otimes^2}\, ds\right\|_\pi \\
	&\quad\le\frac{1}{\delta}\int_0^t 2\left\|Y_{s+\delta}-Y_s\right\|_{L^1} 
	\left\|(Z+A)_{s+\delta}-(Z+A)_s\right\|_{L^1}+\left\|(Z+A)_{s+\delta}-(Z+A)_s 
	\right\|_{L^1}^2\, ds 
\end{align*}
with $\|\cdot\|_{L^1}$ abbreviating the norm in $L^1([0,1];\R^d)$. This is precisely the 
expression \eqref{qv:prop:1:eq:1} of the proof of Proposition \ref{qv:reg:prop:1} (b). The 
proof of Proposition \ref{qv:reg:prop:1} (b) shows that under the assumptions \eqref{eq:rc:q1} 
and \eqref{eq:rc:q2} this expression tends to $0$ ucp as $\delta\to 0$. The claim follows.
\end{proof}







\section{It\^o's formula} 
\label{it:section:1}

According to Lemma \ref{lem:qv:var:1} (b) and relation \eqref{eq:rc:p6}, the processes $Y$ and $Z$ are quadratically
integrable provided that \eqref{eq:rc:q1} and \eqref{eq:rc:q2} hold, choose $\tau=\{0=t_0,t_1=T\}$ for this.
Thus, depending on (quadratic) integrability of $\sum_{i=1}^\infty G_i(0)S_i$, the process
$X$ is a (quadratically) integrable semimartingale with decomposition $X=Y+(Z+A)$, $Y$ being
the martingale part. In order to establish an It\^o formula for the process $X$, one could
think of applying the It\^o formula in Banach spaces given by \cite{GyongyKrylov1981}, if possible. However,
having established the tensor quadratic variation in Section 4, it is more natural and more
direct to take advantage of the It\^o formula in Banach spaces corresponding to the stochastic
calculus of regularization, see \cite{Girolami2014} and \cite{GirolamiRusso2014}. 
\medskip

To ease the notation in this section we denote $B:=L^1([0,1];\R^d)$. Below we use the pairing dualities 
${}_{B^\ast}\langle \ \cdot \ ,\ \cdot \ \rangle_B$ and ${}_{(B\ptp B)^\ast}\langle\ \cdot\ , 
\ \cdot\ \rangle_{(B\ptp B)^{\ast\ast}}$ in the sense and notation of \cite{Girolami2014} and 
\cite{GirolamiRusso2014}. Using the standard identification of the dual spaces, we recall that for $f\in L^1([0,1];\R^d)=B$, $g^\ast\in B^\ast$, and 
some representing element $g\in L^\infty([0,1];\R^d)\cong B^\ast$ of $g^\ast$ we have
\begin{align*}
	{}_{B^\ast}\langle g^\ast ,f \rangle_B=\int_0^1 f(u)\cdot g(u)\, du.
\end{align*}
Furthermore we restrict ${}_{(B\ptp B)^\ast}\langle\ \cdot\ ,F \rangle_{(B\ptp B)^{\ast\ast}}$ 
to $F\in B\ptp B\cong L^1([0,1]^2;\R^{d^2})$. To discern the different scalar products we will use the symbol $\sbt$ to denote the scalar 
product in $\R^{d^2}$. For $G^\ast\in (B\ptp B)^\ast$ and some representing element $G\in 
L^\infty([0,1]^2;\R^{d^2}) \cong (B\ptp B)^\ast$ the pairing duality becomes
\begin{align*}
	{}_{(B\ptp B)^\ast}\left\langle G^\ast,F \right\rangle_{(B\ptp B)^{\ast\ast}}=\int F(u,v)\sbt 
	G(u,v)\, dudv.
\end{align*}

\begin{definition}
	Let $(X_t)_{t\in [0,T]}$ and $(Y_t)_{t\in [0,T]}$ be continuous $B$-valued, respectively 
	$B^\ast$-valued stochastic processes. The \emph{forward integral of $Y$ with respect to $X$} 
	denoted by $\int_0^t  {}_{B^\ast}\langle Y_s ,dX_s \rangle_B$ is defined as the limit
	\begin{align*}
		\int_0^t  {}_{B^\ast}\langle Y_s ,dX_s \rangle_B:=\lim_{\eps\to 0} \int_0^t 
		{\vphantom{\bigg\langle}}_{B^\ast}\left\langle Y_s ,\frac{X_{s+\eps}-X_s}{\eps} 
		\right\rangle_B\, ds
	\end{align*}
	in probability if it exists.
\end{definition}
\begin{definition}
Let $F$ be a mapping $F:[0,T]\times B \to \R$. We say that $F$ is of \emph{Fr\'echet class 
$C^{1,2}$} (in symbols $F\in C^{1,2}$) if $F$ is one time continuously Fr\'echet differentiable 
and two times continuously Fr\'echet differentiable in the second argument. That is, denoting 
the Fr\'echet derivative with respect to the second variable by $D$, for every $t\in[0,T]$ we 
have $D F(t,\cdot): B\to B^\ast$ and $D^2 F(t,\cdot): B\to (B\ptp B)^\ast$ continuously.
\end{definition}
\begin{theorem} 
	\label{it:theorem:1}
	Suppose \eqref{eq:rc:q1} and \eqref{eq:rc:q2}, i.e. 
\begin{align*}
\sum_{m=0}^\infty 2^{-\frac{m}{2}}\cdot \lambda_{d2^{m+1}}\cdot\left(\max_{d2^m+1< i\le d2^{m+1}} 
|G_i(0)|+ m^{\hf}\right)<\infty. 
\end{align*}Let $(X_t)_{t\in [0,T]}$ be given by \eqref{rc:intro:eq:1}, i.e. 
\begin{align*}
	X_t=\sum_{i=1}^\infty  G_i(\lambda_it)\cdot S_i,\quad t\ge 0.
\end{align*}
Furthermore denote $B=L^1([0,1];\R^d)$, and let 
	$F\in C^{1,2}$. Then
	\\[0.3cm] (a) For every $t\in [0,T]$ the forward integral $\int_0^t  {}_{B^\ast}\langle 
	DF(s,X_s) ,dX_s \rangle_B$ exists.
	\\[0.3cm] (b) We have the It\^o formula
	\begin{align*}
		F(t,X_t)&=F(0,X_0)+\int_0^t \frac{\partial}{\partial s} F(s,X_s)\, ds+\int_0^t 
		{}_{B^\ast}\langle DF(s,X_s),dX_s\rangle_B \\
		&\quad+\hf \int_0^t {\vphantom{\big(}}_{(B\ptp B)^\ast}\left\langle D^2F(s,X_s), 
		\Theta\right\rangle_{(B\ptp B)^{\ast\ast}}\, ds,
	\end{align*}	
	where $\Theta$ is given by \eqref{tv:eq:1}.
\end{theorem}
\begin{proof}
(a) Due to the existence of a scalar and tensor quadratic variation for $X_t$ we may apply 
Proposition 3.15 of \cite{GirolamiRusso2014} and Theorem 6.3 of \cite{Girolami2014} from which 
the statement follows.
\\[0.3cm] (b) This is an immediate consequence of Proposition \ref{tv:prop:1} above, and 
Theorem 6.3 of \cite{Girolami2014} together with Remark 6.2 of \cite{Girolami2014}.
\end{proof}

Now let us specify the It\^o formula to cylindrical functions $F$ of type $F(s,\gamma) 
=f(s\, ;\langle S_1,\gamma\rangle,\ldots, \langle S_k,\gamma\rangle)$, $f\in C_0^\infty\left( 
\R^{k+1}\right)$, $s\ge 0$, $\gamma\in C_0([0,1];{\mathbb R}^d)$. Here, $S_i$ and $\langle S_i, 
\gamma \rangle$ are given by $S_i(s):=\int_0^s g_i(u)\, du$ and $\langle S_i,\gamma \rangle:= 
\int_0^1 g_i(u)\, d\gamma_u$, cf. \eqref{intro:eq:1}. Since such functions $F$ are discontinuous 
if $\gamma$ is considered as an element belonging to $L^1([0,1];{\mathbb R}^d)$ we cannot directly 
apply Theorem \ref{it:theorem:1} (b). 

For $i=d(2^m+k-1)+j$, $m\in \{0,1,\ldots\}$, $k\in \{1,\ldots,2^m\}$, $j\in \{1,\ldots,d\}$, let 
$g_i^{(\alpha)}$, $\alpha\in (0,1)$, be an element of $C_0^\infty([0,1];\R^d)$ such that $g_i^{ 
(\alpha)}(s)\cdot e_{j'}=0$, $s\in [0,1]$, for $\{1,\ldots,d\}\ni j'\neq j$. Furthermore, 
define the $\R^d$-valued signed measures $\mu_i^{(\alpha)}$ and $\mu$ on $([0,1],\B([0,1])$ by 
$\mu_i^{(\alpha)}((a,b]):=g_i^{(\alpha)}(b)-g_i^{(\alpha)}(a)$ and $\mu_i((a,b]):=g_i(b)-g_i(a)$. 
Suppose that $\mu_i^{(\alpha)}$ converges to $\mu_i$ in the weak$^\ast$-topology as $\alpha\to 0$ 
and that $\|\mu_i^{(\alpha)}\|_v\le \|\mu_i\|_v$, $\alpha\in (0,1)$, where $\|\cdot\|_v$ denotes 
the total variation.

Now let us study cylindrical functions $F^{(\alpha)}$ of type $F^{(\alpha)}(s,\gamma)=f(s\, ; 
\langle S^{(\alpha)}_1,\gamma_s\rangle,\ldots,\langle S^{(\alpha)}_k,\gamma_s\rangle)$ where 
$f$, $s$, and $\gamma$ are as above and $S^{(\alpha)}_i(s):=\int_0^s g^{(\alpha)}_i(u)\, du$ as 
well as $\langle S^{(\alpha)}_i,\gamma \rangle:=\int_0^1 g^{(\alpha)}_i(u)\, d\gamma_u$. 

It follows that
\begin{align}
\label{it:eq:1}
	\notag
	&DF^{(\alpha)}(s,\gamma)(h)
	=\sum_{i=1}^k \frac{\partial}{\partial x_i}f\left(s\, ;\langle S^{(\alpha)}_1,\gamma\rangle, 
	\ldots,\langle S^{(\alpha)}_k,\gamma\rangle\right)\langle S^{(\alpha)}_i,h \rangle \\ 
	\notag
	&\quad=\sum_{i=1}^k \frac{\partial}{\partial x_i}f\left(s\, ;\langle S^{(\alpha)}_1,\gamma 
	\rangle,\ldots,\langle S^{(\alpha)}_k,\gamma\rangle\right) \int_0^1 \left(-g_i^{(\alpha)} 
	\right)'\cdot h \,dt\\ 
	&\quad={}_{B^\ast}\langle DF^{(\alpha)}(s,\gamma),h\rangle_B\vphantom{\sum_{i=1}^k}
\end{align}
where this chain of equations is true for all $h\in \{\tilde{h}:\tilde{h}\in C([0,1];\R^d),\ 
h(0)=h(1)=0\}$. By the well-definiteness of the second line for all $h\in L^1([0,1];\R^d)$ we 
may continuously extend the left-hand side as well as the right-hand side in $L^1([0,1];\R^d)$ 
to all $h\in L^1([0,1];\R^d)$. Furthermore, for $\Phi\in L^1([0,1]^2;\R^{d^2})$ introduce 
\begin{align*}
	\tlangle{ (S^{(\alpha)}_i(\cdot))\otimes(S^{(\alpha)}_{i'}(\cdot)), \Phi }:=\int_{[0,1]^2} 
	g_i^{(\alpha)}(u)\otimes g_{i'}^{(\alpha)}(v) \sbt d\Phi (u,v). 
\end{align*}
In the same sense as above we have
\begin{align}
\label{it:eq:2} 
	\notag
	&\nquad\nquad D^2F^{(\alpha)}(s,\gamma)(H)
	=\sum_{i,i'=1}^k \frac{\partial^2}{\partial x_ix_{i'}}f\left(s\, ;\langle S^{(\alpha)}_1, 
	\gamma\rangle,\ldots,\langle S^{(\alpha)}_k,\gamma\rangle\right)\tlangle{(S^{(\alpha)}_i 
	(\cdot))\otimes(S^{(\alpha)}_{i'}(\cdot)),i\circ H}\\
	\notag
	&\nquad=\sum_{i,i'=1}^k\frac{\partial^2}{\partial x_ix_{i'}}f\left(s\, ;\langle S^{(\alpha)}_1, 
	\gamma\rangle,\ldots,\langle S^{(\alpha)}_k,\gamma\rangle\right) \int_{[0,1]^2} g_i^{(\alpha)} 
	(u)\otimes g_{i'}^{(\alpha)}(v) \sbt d(i\circ H)(u,v)\\
	\notag
	&\nquad=\sum_{i,i'=1}^k\frac{\partial^2}{\partial x_ix_{i'}}f\left(s\, ;\langle S^{(\alpha)}_1, 
	\gamma\rangle,\ldots,\langle S^{(\alpha)}_k,\gamma\rangle\right)\int_{[0,1]^2}\left( 
	g_i^{(\alpha)}(u)\right)'\otimes \left(g_{i'}^{(\alpha)}(v)\right)' \sbt (i\circ H)(u,v) 
	\ dudv\\
	&\nquad={}_{(B\ptp B)^\ast}\left\langle D^2F^{(\alpha)}(s,\gamma),H \right\rangle_{(B\ptp 
	B)^{\ast\ast}},\quad H\in  L^1([0,1];\R^{d})\ptp L^1([0,1];\R^{d}).\vphantom{\sum_{i=1}^k}
\end{align}

For the next lemma recall the decomposition $X_t=Y_t+Z_t+A_t$, $t\ge 0$, introduced in Section 
\ref{qv:section:1}.
\begin{lemma}
	\label{it:lem:1}
Suppose \eqref{eq:rc:q1} and \eqref{eq:rc:q2}. Let $i\in\N$ and $T>0$. 
\\
(a) For the quadratic variation $\left[\langle S^{(\alpha)}_i,Y_\cdot\rangle 
\right]_t$ of $\langle S^{(\alpha)}_i,Y_t\rangle$ at $t>0$ it holds that 
\begin{align*}
	\sup_{\alpha\in (0,1)}E\left[\left[\langle S^{(\alpha)}_i,Y_\cdot\rangle\right]_t\right] 
	<\infty. 
\end{align*} 
(b) For the total variation $\left|\langle S^{(\alpha)}_i,Z_\cdot+A_\cdot\rangle \right|_t$ of 
$\langle S^{(\alpha)}_i,Z_t+A_t\rangle$ at $t>0$ it holds that 
\begin{align*}
	\sup_{\alpha\in (0,1)}E\left[\left|\langle S^{(\alpha)}_i,Z_\cdot+A_\cdot\rangle\right|_t 
	\right] <\infty. 
\end{align*} 
(c) It holds that $\langle S^{(\alpha)}_i,X_t\rangle \stack{\alpha\to 0}{\lra} \left\langle S_i, 
X_t\right\rangle$ uniformly on $t\in [0,T]$ almost surely. 
\end{lemma}
\begin{proof} 
In the following proof we abbreviate $\|\cdot\|_{C_0([0,1];\R^d)}=\|\cdot\|_{C}$ and $\|\cdot 
\|_{L^1([0,1];\R^d)}=\|\cdot\|_{L^1}$. \\ 
(a) Using the partition of $[0,T]$ introduced in Section \ref {qv:section:1}, we have 
\begin{align*}
	&\sup_{\alpha\in (0,1)}E\left[\left[\langle S^{(\alpha)}_i,Y_\cdot\rangle\right]_T\right]
	=\sup_{\alpha\in (0,1)}E\left[\lim_{n\to\infty}\sum_{j:t_j\le T}\langle S^{(\alpha)}_i, 
	Y_{t_j}-Y_{t_{j-1}}\rangle^2\right]\\
	&\quad=\sup_{\alpha\in (0,1)}E\left[\lim_{n\to\infty}\sum_{j:t_j\le T}\left(\int_0^1\left( 
	-g_i^{(\alpha)}\right)'(u)\cdot\left(Y_{t_j}-Y_{t_{j-1}}\right)(u)\,du\right)^2\right]\\ 
	&\quad\le\sup_{\alpha\in (0,1)}E\left[\lim_{n\to\infty}\sum_{j:t_j\le T}\left\|\left( 
	g_i^{(\alpha)}\right)'(u)\right\|^2_{L^1}\cdot\left\|Y_{t_j}-Y_{t_{j-1}}\right\|^2_{C}\right] 
    \\ 
	&\quad\le\sup_{\alpha\in (0,1)}\left\|\left(g_i^{(\alpha)}\right)'(u)\right\|^2_{L^1}\cdot 
	E\left[\lim_{n\to\infty}\sum_{j:t_j\le T}\left\|Y_{t_j}-Y_{t_{j-1}}\right\|^2_{C}\right].
\end{align*} 
This proves part (a) of the lemma since $\|(g_i^{(\alpha)})'(u)\|_{L^1}=\|\mu_i^{(\alpha)}\|_v 
\le \|\mu_i\|_v$, $\alpha\in (0,1)$, by hypothesis. Furthermore the second term is finite by 
Lemma \ref{lem:qv:var:1} (a). \\ 
(b) We have
\begin{align*}
	&\nquad\nquad\sup_{\alpha\in (0,1)} E\left[\left|\langle S_i^{(\alpha)},Z_\cdot+A_\cdot \rangle\right|_t 
	\right]
	\le \sup_{\alpha\in (0,1)} E\left[\left|-\sum_{j=1}^\infty \int_{u=0}^\cdot\lambda_j 
	e^{-\lambda_ju}W_j\left(e^{2\lambda_ju}-1\right)\, du \cdot \langle S_i^{(\alpha)},S_j\rangle 
	\right|_t\right] \\
	&+\sup_{\alpha\in (0,1)} E\left[\left|\sum_{j=1}^\infty e^{-\lambda_j\cdot}G_j(0)\cdot 
	\langle S_i^{(\alpha)},S_j\rangle\right|_t\right]\\
	&\nquad\le  \sum_{j=1}^\infty  \sup_{\alpha\in (0,1)} \langle S_i^{(\alpha)},S_j\rangle\cdot 
	E\left[\left|\int_{u=0}^\cdot\lambda_je^{-\lambda_ju}W_j\left(e^{2\lambda_ju}-1\right)\, du  
	\right|_t\right]+\sum_{j=1}^\infty  \sup_{\alpha\in (0,1)}\langle S_i^{(\alpha)},S_j\rangle 
	\cdot |G_j(0)| \\
	&\nquad =\sum_{j=1}^\infty  \sup_{\alpha\in (0,1)} \langle S_i^{(\alpha)},S_j\rangle\cdot 
	\left( E\left[\int_{u=0}^t \lambda_je^{-\lambda_ju}\left|W_j\left(e^{2\lambda_ju}-1\right)  
	\right|\, du \right]+ |G_j(0)| \right)\\
	&\nquad\le\sum_{j=1}^\infty \sup_{\alpha\in (0,1)} \langle S_i^{(\alpha)},S_j\rangle\cdot 
	\left(c t\lambda_j+|G_j(0)|\right)\\
	&\nquad=\sum_{j=1}^\infty\sup_{\alpha\in (0,1)} \int_0^1\left(-g_i^{(\alpha)}\right)'(u)\cdot 
	S_j(u)\, du\cdot\left(c t\lambda_j+ |G_j(0)|\right) \\
	&\nquad \le \sup_{\alpha\in (0,1)}\left\|\left(g_i^{(\alpha)}\right)'(u)\right\|_{L^1}\cdot 
	\left\| \sum_{j=1}^\infty \left(c t\lambda_j+|G_j(0)|\right)\cdot S_j\right\|_{C}
\end{align*}
for some $c>0$ independent of $j$ and $t$. The claim now follows from
\begin{align*}
	\left\| \sum_{j=2}^\infty \left(c t\lambda_j+|G_j(0)|\right)\cdot S_j\right\|_{C}
	\le (cT\vee 1)\sum_{m=0}^\infty\left(\lambda_{d2^{m+1}}+\max_{d2^{m}<j\le d2^{m+1}}|G_j(0)| 
	\right)\left\|\sum_{j=d2^m+1}^{d2^{m+1}} S_j\right\|_{C}
\end{align*}
which is finite by $\left\|\sum_{j=d2^m+1}^{d2^{m+1}} S_j\right\|_{C}=2^{-\frac{m}{2}}$, 
\eqref{eq:rc:q1}, and \eqref{eq:rc:q2}.
\\(c) Let $i\in \N$. Furthermore let $\eps>0$ and $n\in \N$ such that 
\begin{align*}
	\sum_{j=n+1}^\infty |G_j(\lambda_jt)|\cdot\|S_j\|_C<\frac{\eps}{2\|\mu_i\|_v}
\end{align*}
for all $t\in [0,T]$. For the existence of such a random $n\in \N$ see \eqref{rc:lem:2:eq:1}. 
We have
\begin{align*}
	&\sum_{j=n+1}^\infty G_j(\lambda_jt)\left(\langle S^{(\alpha)}_i,S_j\rangle -\langle S_i,S_j 
	\rangle\right)=\sum_{j=n+1}^\infty G_j(\lambda_jt)\left(-\int_0^1 S_j\, d\mu_i^{(\alpha)} 
	+\int_0^1 S_j\, d\mu_i\right)\\
	&\quad \le \sum_{j=n+1}^\infty |G_j(\lambda_jt)|\cdot\|S_j\|_{C}\cdot\left(\|\mu_i^{(\alpha)} 
	\|_v+\|\mu_i\|_v\right)\le 2\sum_{j=n+1}^\infty |G_j(\lambda_jt)|\cdot\|S_j\|_{C}\cdot\|\mu_i 
	\|_v<\eps, 
\end{align*} 
the second last inequality because of $\|\mu_i^{(\alpha)}\|_v\le \|\mu_i\|_v$, $\alpha\in (0,1)$. 
Furthermore, since $\mu_i^{(\alpha)}$ converges to $\mu_i$ in the weak$^\ast$-topology as 
$\alpha\to 0$, 
\begin{align*}
	&\left| \langle S^{(\alpha)}_i,X_t\rangle -\langle S_i,X_t\rangle\right|
	\le \eps+\left|\sum_{j=1}^n G_j(\lambda_jt)\cdot \left(\langle S^{(\alpha)}_i,S_j\rangle 
	-\langle S_i,S_j\rangle\right)\right|\\
	&\quad\le \eps+\sum_{j=1}^n \left|G_j(\lambda_jt)\right|\cdot \left|-\int_0^1 S_j\, d 
	\mu_i^{(\alpha)}-\delta_{ij}\right|\\
	&\stack{\alpha\to 0}{\lra} \eps+\sum_{j=1}^n \left|G_j(\lambda_jt)\right|\cdot \left| 
	-\int_0^1 S_j\, d\mu_i-\delta_{ij} \right|=\eps,\quad i\in \N. 
\end{align*}
The claim follows.
\end{proof}

\begin{proposition}
\label{it:prop:1}
For $X_t$ given by \eqref{rc:intro:eq:1} and $F(s,X_s)=f(s\, ;\langle S_1,X_s 
\rangle,\ldots, \langle S_k,X_s\rangle)$, $f\in C_0^\infty\left(\R^{k+1}\right)$, $s\ge 0$, 
the following It\^o formula holds.
\begin{align*}
	F(t,X_t)&=F(0,X_0)+\int_0^t \frac{\partial}{\partial s} F(s,X_s)\, ds+\sum_{i=1}^k 
		\int_0^t \frac{\partial}{\partial x_i}f(s\, ;G_1(\lambda_1s),\ldots, G_k(\lambda_ks)) 
		\, d_sG_i(\lambda_is)\\
		&\quad+ \sum_{i=1}^k\int_0^t \lambda_i \frac{\partial^2}{\partial x_i^2} 
		f(s\, ;G_1(\lambda_1s),\ldots,G_k(\lambda_ks))\, ds.
\end{align*}
\end{proposition}
\begin{proof}
Next we apply the results of \cite{KurtzProtter1991} Section 2. Let 
$Y_n$, $n\in\N$, be a sequence of $\R$-valued semimartingales admitting a decomposition $Y_n=M_n+V_n$ such that 
for each $t\ge 0$ it holds that $\sup_{s\le t} |Y_n(s)-\eta(s)|\lni 0$ in probability, $\sup_n 
E[M_n(t)^2]=\sup_n E[[M_n]_t]<\infty$, as well as $\sup_n E[|V_n|_t]<\infty$. Under these conditions 
if $\sup_{s\le t}|X_n(s)-\xi(s)|\lni 0$ in probability then for each $T>0$
\begin{align} 
	\label{it:prop:1:eq:0}
	\sup_{t\le T}\left|\int_0^t X_n(s)\, dY_n(s)-\int_0^t \xi(s)\, d\eta(s)\right|\lni 0
\end{align}
in probability. As already mentioned, the above conclusions follow from \cite{KurtzProtter1991} Section 2.
From \eqref{it:eq:1} it follows that
\begin{align}
	\notag
	&\int_0^t {}_{B^\ast}\langle DF^{(\alpha)}(s,X_s),dX_s\rangle_B
	=\lim_{\eps\to 0}\int_0^t {\vphantom{\frac{X_{s+\eps}-X_s}{\eps}}}_{B^\ast}\left\langle 
	DF^{(\alpha)}(s,X_s),\frac{X_{s+\eps}-X_s}{\eps}\right\rangle_B\, ds\\
	\notag
	&\quad=\sum_{i=1}^k \lim_{\eps\to 0} \int_0^t \frac{\partial}{\partial x_i}f\left(s\, ; 
	\langle S^{(\alpha)}_1,X_s\rangle,\ldots,\langle S^{(\alpha)}_k,X_s\rangle\right)\cdot 
	\left(\frac{\langle S_i^{(\alpha)},X_{s+\eps}\rangle-\langle S_i^{(\alpha)},X_s\rangle}{\eps} 
	\right)\, ds\\
	\label{it:prop:1:eq:1}
	&\quad=\sum_{i=1}^k \int_0^t \frac{\partial}{\partial x_i}f\left(s\, ;\langle 
	S^{(\alpha)}_1, X_s\rangle,\ldots,\langle S^{(\alpha)}_k,X_s\rangle\right)\cdot 
	d\langle S_i^{(\alpha)},X_{s}\rangle
\end{align}
where the right hand side is an It\^o integral since $\langle S_i^{(\alpha)},X_s\rangle= 
\sum_{j=1}^\infty \langle S_i^{(\alpha)},S_j\rangle G_j(\lambda_js)$ is a semi-martingale, see 
Proposition 6 of \cite{RussoVallois2007}. From Lemma \ref{it:lem:1} it follows that the 
right-hand side converges in the sense of \eqref{it:prop:1:eq:0} to 
\begin{align}
	\label{it:prop:1:eq:3}
	\sum_{i=1}^k \int_0^t \frac{\partial}{\partial x_i}f\left(s\, ;\langle 
	S_1, X_s\rangle,\ldots,\langle S_k,X_s\rangle\right)\cdot 
	d\langle S_i,X_{s}\rangle\quad \text{as }\alpha\to 0.
\end{align}
We recall that 
\begin{align*}
	\Theta=2E\left[\left(\sum_{i=1}^\infty\lambda_i^\hf \xi_i S_i \right)\otimes\left(\sum_{i'= 
	1}^\infty \lambda_{i'}^\hf\xi_{i'} S_{i'} \right)\right]
\end{align*}
where $\xi_i$, $i\in \N$, are independent standard normal random variables. Using 
\cite{JansonKaijser2015}, Theorem 10.2, together with \eqref{eq:rc:q2} we obtain from 
\eqref{it:eq:2} 
\begin{align}
	\notag
	&\hf \int_0^t {\vphantom{\big(}}_{(B\ptp B)^\ast}\left\langle D^2F^{(\alpha)}(s,X_s),\Theta 
	\right\rangle_{(B\ptp B)^{\ast\ast}}\, ds \\
	\notag
	&\quad=\hf\sum_{l,l'=1}^k\int_0^t \frac{\partial^2}{\partial x_lx_{l'}}f\left(s\, ; 
	\langle S^{(\alpha)}_1, X_s\rangle,\ldots,\langle S^{(\alpha)}_k,X_s\rangle\right) \\
	\notag
	&\qquad\times \int_{[0,1]^2} \left(g_l^{(\alpha)}(u)\right)'\otimes \left(g_{l'}^{(\alpha)} 
	(v)\right)' \sbt (i\circ \Theta)(u,v)\, dudv\, ds \\	
	\notag
	&\quad=\sum_{l,l'=1}^k (\lambda_l\lambda_{l'})^\hf  \int_0^t\frac{\partial^2}{\partial x_l 
	x_{l'}}f\left(s\, ;\langle S^{(\alpha)}_1, X_s\rangle,\ldots,\langle S^{(\alpha)}_k,X_s 
	\rangle\right) \\
	\label{it:prop:1:eq:2}
	&\qquad\times \int_{[0,1]^2} \left(g_l^{(\alpha)}(u)\right)'\otimes \left(g_{l'}^{(\alpha)} 
	(v)\right)' \sbt \sum_{i=1}^\infty S_i(u)\otimes S_i(v)\,dudv\, ds.
\end{align}
Here the right-hand side converges almost surely to
\begin{align}
	\notag
	&\sum_{l,l'=1}^k (\lambda_l\lambda_{l'})^\hf  \int_0^t\frac{\partial^2}{\partial x_l 
	x_{l'}}f\left(s\, ;\langle S_1, X_s\rangle,\ldots,\langle S_k,X_s 
	\rangle\right)\\
	\notag
	&\qquad\times\int_{[0,1]^2}  \sum_{i=1}^\infty S_i(u)\otimes S_i(v) \sbt\,d\left(\mu_l(u)\otimes 
	\mu_{l'}(v)\right) \, ds \\
	\label{it:prop:1:eq:4}
	&\quad=\sum_{i=1}^k\lambda_i \int_0^t\frac{\partial^2}{\partial x_i^2}f\left(s\, ; 
	\langle S_1, X_s\rangle,\ldots,\langle S_k,X_s \rangle\right) \, ds\quad\text{as }\alpha\to 0
\end{align}
by Lemma \ref{it:lem:1} (c) and the hypothesis that $\mu_i^{(\alpha)}$ converges to $\mu_i$ 
in the weak$^\ast$-topology. The proposition now follows from the It\^o formula of Theorem 
\ref{it:theorem:1} for $F^{(\alpha)}(s,\gamma)=f(s\, ;\langle S^{(\alpha)}_1,\gamma_s\rangle, 
\ldots,\langle S^{(\alpha)}_k,\gamma_s\rangle)$ and \eqref{it:prop:1:eq:1}-\eqref{it:prop:1:eq:4}.
\end{proof} 

\begin{remark}
Reviewing the finite dimensional approximation of $X$ in Section \ref{rc:section:1} 
this particular specification of the It\^o formula provides a certain double-check of 
the correctness of Proposition \ref{tv:prop:1} and Theorem \ref{it:theorem:1} (b). 
\end{remark}







\appendix
\section{Appendix: Some lemmas in extreme value theory} 
\label{ap:section:1}

Let $G_i$, $i\in \N$, be a sequence of one-dimensional Ornstein-Uhlenbeck processes of the form 
\begin{align*}	
	G_i(t)=G_i(0)e^{-t}+e^{-t}W_i(e^{2t}-1),\quad t\ge 0, 
\end{align*}
with independent one-dimensional standard Wiener processes $W_i$. Let $1\le\lambda_1\le \lambda_2 
\le\ldots$ be a sequence of real constants. For $T>0$ denote 
$G_{m,T}^\ast:=\max \{G_i(\lambda_it)-G_i(0)e^{-\lambda_it}:t\in [0,T], \ d2^m< i\le d2^{m+1}\}$, $m\in {\mathbb Z}_+$. 
We are interested in certain moments of $G_{m,T}^\ast$. In order to obtain these moments let us first consider one Ornstein-Uhlenbeck process $G$ and $\lambda\ge 1$. Below we will use the notation $G^\ast_T:= 
\max_{t\in [0,T]}(G(\lambda t)-G(0)e^{-\lambda t})$.
\begin{lemma}
	\label{lem:ap:ou:1}
	The cumulative distribution function $F_G(x):=P(G_T^\ast\le x)$, $x\in {\mathbb R}$, is 
tail-equivalent to a von Mises function $F$ with $\lim_{x\to\infty}(1-F_G(x))/(1-F(x))=1$.
\end{lemma}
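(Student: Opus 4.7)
The plan is to express $G_T^*$ as the supremum of an explicit centered Gaussian process, derive its sharp tail asymptotic from extreme value theory for non-stationary Gaussian fields, and then exhibit a matching von Mises function.

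First, from the given representation, $G(\lambda t)-G(0)e^{-\lambda t} = e^{-\lambda t}W(e^{2\lambda t}-1)$, so $G_T^*=\sup_{t\in[0,T]} X(t)$ where $X(t):=e^{-\lambda t}W(e^{2\lambda t}-1)$ is a centered Gaussian process with variance $\sigma^2(t)=1-e^{-2\lambda t}$, strictly increasing and attaining its maximum $\sigma_*^2:=1-e^{-2\lambda T}<1$ uniquely at $t=T$. A direct expansion as $\tau\downarrow 0$ gives $\sigma_*^2-\sigma^2(T-\tau)=2\lambda e^{-2\lambda T}\tau+O(\tau^2)$ and $1-\mathrm{Corr}(X(T-\tau),X(T))=(\lambda/\sigma_*^2)\tau+O(\tau^2)$, placing the problem in Piterbarg's regime with Pickands exponent $\alpha=1$ and variance-deficit exponent $\beta=1$.

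Second, I would apply Piterbarg's theorem for non-stationary Gaussian processes whose variance achieves its maximum at a boundary point (with Pickands constant $H_1=1$); this yields the sharp asymptotic
\[1-F_G(x)=P(G_T^*>x)\sim c\,x^{-1}\,\exp\!\left(-\frac{x^2}{2\sigma_*^2}\right),\qquad x\to\infty,\]
for an explicit constant $c=c(\lambda,T)>0$ (equivalently, $P(G_T^*>x)\sim c'\,\Psi(x/\sigma_*)$ up to a multiplicative constant, with $\Psi$ the standard Gaussian tail).

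Third, define $F$ by $1-F(x):=c\,x^{-1}\exp(-x^2/(2\sigma_*^2))$ for large $x$ and set $\psi(x):=x^2/(2\sigma_*^2)+\log x-\log c$, so that $1-F(x)=\exp(-\psi(x))$. Then $\psi'(x)=x/\sigma_*^2+1/x>0$ eventually, and the auxiliary function $a(x):=1/\psi'(x)=\sigma_*^2 x/(x^2+\sigma_*^2)\sim\sigma_*^2/x$ satisfies $a'(x)=\sigma_*^2(\sigma_*^2-x^2)/(x^2+\sigma_*^2)^2\to 0$ as $x\to\infty$, which is the standard von Mises criterion. Tail equivalence $\lim_{x\to\infty}(1-F_G(x))/(1-F(x))=1$ then holds by construction.

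The principal obstacle is the second step. Since $X$ is non-stationary with variance maximum on the boundary of $[0,T]$, the classical stationary Pickands estimate does not apply directly; one must invoke Piterbarg's double-sum machinery, localizing to a window of width $O(1/x)$ around $t=T$, controlling the variance drop within this window, and bounding the interior contributions by exponentially smaller Gaussian tail estimates. Alternatively, the substitution $s=e^{2\lambda t}-1$ rewrites $G_T^*=\sup_{s\in[0,S]} W(s)/\sqrt{1+s}$ with $S=e^{2\lambda T}-1$, reducing the problem to a curved-boundary first-passage estimate for Brownian motion, which can be handled via Girsanov-type arguments to recover the same sharp constant.
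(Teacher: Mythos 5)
Your proposal is essentially correct, but it takes a genuinely different route from the paper. The paper starts from the same representation $G(\lambda t)-G(0)e^{-\lambda t}=e^{-\lambda t}W(e^{2\lambda t}-1)$, but then follows exactly the alternative you mention at the end: it substitutes $s=e^{2\lambda t}-1$ to turn the problem into a curved-boundary crossing problem, $P\big(W(s)>x(s+1)^\hf \text{ for some } s\in[0,S]\big)$ with $S=e^{2\lambda T}-1$, and invokes the Corollary to Lemma 11 of \cite{Cuzick1981} to obtain the sharp tail asymptotics as an explicit integral, namely $\bar F(x)=\int_0^S \frac{1}{2s}\cdot\frac{x(s+1)^\hf}{s^\hf}\,\varphi\big(x(s+1)^\hf s^{-\hf}\big)\,ds+\bar\Phi\big(x(S+1)^\hf S^{-\hf}\big)$. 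This integral expression itself is taken as the von Mises function $F$, and the von Mises property is checked via the second-derivative criterion $(1-F)F''/(F')^2\to -1$ of \cite{EmKlMi}, Proposition 3.3.28, by differentiating under the integral and establishing the ratio limit \eqref{eq:lem:ap:ou:1:4}. You instead stay with the time parametrization, identify the Piterbarg regime $\alpha=\beta=1$ (your local expansions of the variance and correlation are correct, and since $\sigma_*^2=S/(S+1)$ your exponential rate $e^{-x^2/(2\sigma_*^2)}$ agrees with the paper's $\varphi\big(x(S+1)^\hf S^{-\hf}\big)$), obtain a closed-form tail $c\,x^{-1}e^{-x^2/(2\sigma_*^2)}$, and then the von Mises verification via the auxiliary function $a(x)=1/\psi'(x)$ with $a'(x)\to 0$ is elementary. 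What each approach buys: yours yields a cleaner von Mises function and a much shorter verification of the von Mises property; the paper's route avoids the double-sum machinery (the constant comes for free from Cuzick's result), and, more importantly, the integral representation together with the derivative identities \eqref{eq:lem:ap:ou:1:1}--\eqref{eq:lem:ap:ou:1:4} is reused in Lemma \ref{lem:ap:ou:2}, Lemma \ref{lem:ap:ou:3} and Lemma \ref{lem:rc:max:ou} to control the norming sequences $c_n(\lambda)$, $d_n(\lambda)$ uniformly in $\lambda$, which a closed-form asymptotic with an unspecified constant would not immediately provide. One small caveat on your second step: in the boundary case with $\alpha=\beta$ the constant in Piterbarg's theorem is not the Pickands constant $H_1=1$ but a Piterbarg-type constant determined by the ratio of the variance-decay and correlation-decay coefficients; since only existence and positivity of the constant matter for tail equivalence, this slip does not affect your conclusion, but the constant should not be asserted to equal $1$.
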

\begin{proof}
Let us use the representation $G(\lambda t)-G(0)e^{-\lambda t}=e^{-\lambda t}W(e^{2\lambda t}-1)$, 
$t\in[0,T]$, where $W$ is a suitable one-dimensional standard Wiener process. We have
\begin{align*}
	P(G^\ast_T\le x)
	&=P\big(\{G(t)-G(0)e^{-\lambda t}\le x \ \mbox{\rm for all}\ t\in [0,T]\}\big)\\
	&=P\big(\{W(s)\le x(s+1)^\hf\ \mbox{\rm for all}\ s\in [0,S]\}\big),
\end{align*}
where $S\equiv S(\lambda)=e^{2\lambda T}-1$. Let $\varphi$ and $\Phi$ denote the density and 
cumulative distribution function of the $N(0,1)$-distribution. Also let $\bar{\Phi}=1-\Phi$. 
Using the Corollary to Lemma 11 of \cite{Cuzick1981} we obtain
\begin{align}
	\notag
	&\nquad\nquad P(G_T^\ast>x)
	=P\big(\{W(s)>x(s+1)^\hf\ \text{ for some }s\in [0,S]\}\big)\\
	\label{eq:qv:int:1}
	&\nquad\sim\int_0^S \frac{1}{2s}\cdot \frac{x(s+1)^\hf}{s^\hf}\cdot\varphi\bigg(\frac 
    {x(s+1)^\hf}{s^\hf}\bigg)\, ds+\bar{\Phi}\bigg(\frac{x(S+1)^\hf}{S^\hf}\bigg)=:\bar{F}(x) 
	\equiv 1-F(x)\quad\mbox{\rm as } x\to\infty 
\end{align}
in the sense that the ratio of the left-hand side of $\sim$ and the right-hand side of $\sim$ 
tends to one as $x\to\infty$. We also mention that in (\ref{eq:qv:int:1}) the variable $x$ 
plays the role of $n$ in the Corollary to Lemma 11 of \cite{Cuzick1981} and that the conditions 
of this corollary follow immediately. 

As shown in \cite{EmKlMi}, Proposition 3.3.28 and Example 3.3.23, it is now sufficient to verify 
\begin{align}
	\label{ap:F:eq:1}
	\lim_{x\to \infty}\frac{\big(1-F(x)\big)F''(x)}{\big(F'(x)\big)^2}=-1
\end{align}
and $F''(x)<0$ for sufficiently large $x$. We note that
\begin{align*}
    \bar{\Phi}\bigg(\frac{x(S+1)^\hf}{S^\hf}\bigg)=\int_0^S \bigg(\frac{1}{2s}\cdot 
	\frac{x(s+1)^\hf}{s^\hf}-\frac{x}{2s^\hf (s+1)^\hf}\bigg)\cdot\varphi\bigg(\frac{x(s+1)^\hf}
	{s^\hf}\bigg)\, ds 
\end{align*}
which gives with \eqref{eq:qv:int:1} 
\begin{align*}
	\bar{F}(x)=\int_0^S \frac{1}{s}\cdot \frac{x(s+1)^\hf}{s^\hf}\cdot\varphi\bigg( 
	\frac{x(s+1)^\hf}{s^\hf}\bigg)\, ds-\int_0^S \frac{x}{2s^\hf (s+1)^\hf}\cdot\varphi 
	\bigg(\frac{x(s+1)^\hf}{s^\hf}\bigg)\, ds.
\end{align*}
We observe that with $\psi(s,x):=\varphi\left(x(s+1)^\hf\cdot s^{-\hf}\right)$ and $\mu(ds):= 
	\left((s+2)\cdot (2s)^{-\hf}(s+1)^{-\frac32}\right)\, ds$
we have 
\begin{align} 
	\label{eq:lem:ap:ou:1:1}
	\bar{F}(x)=-\int_0^S \frac{d}{dx} \bigg[\varphi\bigg(\frac{x(s+1)^\hf}{s^\hf}\bigg) 
	\bigg]\, \frac{(s+2)\, ds}{2s^\hf(s+1)^{\frac32}}\equiv x\cdot\int_0^S\psi(s,x)\cdot
	\frac{s+1}{s}\, \mu(ds)
\end{align} 
which implies 
\begin{align} 
	\label{eq:lem:ap:ou:1:2}
	\bar{F}'(x)=\int_0^S\psi(s,x)\cdot\frac{s+1}{s}\, \mu(ds)-x^2\cdot \int_0^S\psi(s,x)
\cdot\left(\frac{s+1}{s}\right)^2\, \mu(ds)
\end{align} 
and 
\begin{align} 
	\label{eq:lem:ap:ou:1:3}
	\bar{F}''(x)=-3x\cdot\int_0^S\psi(s,x)\cdot\left(\frac{s+1}{s}\right)^2\, \mu(ds) 
	+x^3\cdot \int_0^S\psi(s,x)\cdot\left(\frac{s+1}{s}\right)^3\, \mu(ds). 
\end{align} 
Since, for $k,l\in {0,1,2}$, 
\begin{align} 
	\label{eq:lem:ap:ou:1:4}
	\left.\int_0^S\psi(s,x)\cdot\left(\frac{s+1}{s}\right)^k\, \mu(ds)\right/ 
	\int_0^S\psi(s,x)\cdot\left(\frac{s+1}{s}\right)^l\, \mu(ds)\stack{x\to\infty}{\lra} 
	\left(\frac{S+1}{S}\right)^{k-l} 
\end{align} 
we get
\begin{align*}
	\lim_{x\to \infty} \frac{\bar{F}(x)\bar{F}''(x)}{\big(\bar{F}'(x)\big)^2}=1 
\end{align*}
which is \eqref{ap:F:eq:1}. From \eqref{eq:lem:ap:ou:1:3} we deduce $F''(x)<0$ for sufficiently 
large $x$. The statement follows.
\end{proof}

\begin{lemma}
	\label{lem:ap:ou:2}
(a) There exist sequences $c_n>0$ and $d_n\in \R$, $n\in \N$, such that 
\begin{align*}
	\lim_{\nti} F_G^n(c_nx+d_n)=\lim_{\nti} F^n(c_nx+d_n)=e^{-e^{-x}},\quad x\in \R,
\end{align*}
i.e. $F_G$ belongs to the domain of attraction of the Gumbel distribution.\\
(b) The sequences $c_n$ and $d_n$, $n\in \N$, can be chosen by 
\begin{align*}
	d_n:=F^{-1}\left(1-\frac{1}{n}\right)\quad\mbox{\rm and}\quad
	c_n:=\frac{\bar{F}(d_n)}{F'(d_n)}, 
\end{align*}
where $F^{-1}$ denotes the inverse of the restriction of $F$ to $[G(0),\infty)$. \\ 
(c) There exist the limits 
\begin{align}
	\label{eq:lem:ap:ou:2:1}
	c:=\lim_\nti c_n (\ln n)^\hf >0 
\end{align} 
and 
\begin{align} 
	\label{eq:lem:ap:ou:2:2}
	d:=\lim_\nti d_n (\ln n)^{-\hf} >0.
\end{align}
\end{lemma}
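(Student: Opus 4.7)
The plan is to leverage Lemma~\ref{lem:ap:ou:1} and its proof. Indeed, equation~\eqref{ap:F:eq:1} together with the concavity $F''(x)<0$ for large $x$ is precisely the von Mises criterion, so $F$ is a von Mises function for the Gumbel law. By the classical domain-of-attraction theorem for von Mises functions (see \cite{EmKlMi}, Example~3.3.23 and Proposition~3.3.25), $F^n(c_nx+d_n)\to e^{-e^{-x}}$ with the canonical norming sequences $d_n=F^{-1}(1-1/n)$ and $c_n=1/(nF'(d_n))$. Since $\bar F(d_n)=1/n$ these are exactly the sequences displayed in (b). Tail-equivalence of $F_G$ and $F$, established in Lemma~\ref{lem:ap:ou:1} and recorded in \cite{EmKlMi}, Proposition~3.3.28, transfers the Gumbel convergence to $F_G$ with \emph{the same} norming constants. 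This proves (a) and (b).

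For (c) the strategy is a Laplace-type asymptotic analysis of the integral representation~\eqref{eq:lem:ap:ou:1:1}. The exponential in $\psi(s,x)=(2\pi)^{-1/2}\exp(-x^2(s+1)/(2s))$ involves the function $h(s):=(s+1)/(2s)$, which is strictly decreasing on $(0,S]$ with minimum at the right endpoint $s=S$. As $x\to\infty$ the integrand is therefore concentrated near $s=S$, and a standard boundary Laplace expansion (combined with the observation that the integrable singularity of $\mu(ds)$ at $s=0$ is killed by the exponential factor for large $x$) yields an explicit positive constant $C_S$ with
\begin{align*}
\bar F(x)\;\sim\;\frac{C_S}{x}\,\exp\!\left(-\frac{(S+1)\,x^2}{2S}\right)\qquad(x\to\infty).
\end{align*}
Inverting the relation $\bar F(d_n)=1/n$ and retaining the leading exponential term gives
\begin{align*}
d_n\;=\;\sqrt{\frac{2S}{S+1}}\,(\ln n)^{1/2}\bigl(1+o(1)\bigr),
\end{align*}
which is \eqref{eq:lem:ap:ou:2:2} with $d=\sqrt{2S/(S+1)}>0$.

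For the norming scale $c_n$ I would read off from \eqref{eq:lem:ap:ou:1:1}, \eqref{eq:lem:ap:ou:1:2} and \eqref{eq:lem:ap:ou:1:4} that the first term on the right of \eqref{eq:lem:ap:ou:1:2} is negligible against the second, so
\begin{align*}
F'(x)\;=\;-\bar F'(x)\;\sim\;x\,\frac{S+1}{S}\,\bar F(x)\qquad(x\to\infty).
\end{align*}
Consequently
\begin{align*}
c_n\;=\;\frac{\bar F(d_n)}{F'(d_n)}\;\sim\;\frac{S}{(S+1)\,d_n}\;\sim\;\sqrt{\frac{S}{2(S+1)}}\,(\ln n)^{-1/2},
\end{align*}
which is~\eqref{eq:lem:ap:ou:2:1} with $c=\sqrt{S/(2(S+1))}>0$.

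The main obstacle is the Laplace estimate for $\bar F(x)$: one must concentrate the integral at $s=S$ with enough precision to extract not only the exponential rate (which alone pins down $d$) but also the $1/x$ polynomial prefactor, because the latter controls the $o((\ln n)^{1/2})$ correction to $d_n$ and the $(\ln n)^{-1/2}$ scale of $c_n$. Everything else is routine: the passage $\bar F\mapsto d_n$ uses only the monotonicity of $F$, and the asymptotics for $F'$ is a direct consequence of the ratio limits~\eqref{eq:lem:ap:ou:1:4} already available from the proof of Lemma~\ref{lem:ap:ou:1}.
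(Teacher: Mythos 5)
Your proof is correct, and for most of the lemma it coincides with the paper's argument. Parts (a) and (b) are handled the same way in both: the von Mises property from Lemma \ref{lem:ap:ou:1}, the classical domain-of-attraction theorem for von Mises functions with the canonical norming sequences, and tail equivalence (\cite{EmKlMi}, Proposition 3.3.28) to transfer the Gumbel convergence to $F_G$ with the same constants. Likewise, your derivation of \eqref{eq:lem:ap:ou:2:1} --- obtaining $F'(x)\sim x\,\frac{S+1}{S}\,\bar F(x)$ from \eqref{eq:lem:ap:ou:1:1}, \eqref{eq:lem:ap:ou:1:2}, \eqref{eq:lem:ap:ou:1:4}, and then $c_n\sim S/\bigl((S+1)d_n\bigr)$ --- is exactly the paper's route for $c_n$. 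The one genuine difference lies in how the tail asymptotics of $\bar F$, and hence \eqref{eq:lem:ap:ou:2:2}, is extracted. You run a boundary Laplace expansion to get the full asymptotic $\bar F(x)\sim C_S\,x^{-1}\exp\bigl(-(S+1)x^2/(2S)\bigr)$, polynomial prefactor included. The paper instead applies the mean value theorem for integrals to \eqref{eq:lem:ap:ou:1:1} (this is equation \eqref{eq:lem:ap:ou:2:4}), writing $\bar F(x)=\bar{\F}(x;S_x)\cdot C$ with $S_x\to S$, and then uses only the logarithmic rate $\lim_{x\to\infty}x^{-2}\ln\bar{\F}(x;S)=-(S+1)/(2S)$; combined with $-\ln\bar F(d_n)=\ln n$ this immediately yields $d_n^2/\ln n\to 2S/(S+1)$. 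Both arguments exploit the same concentration of the integrand at $s=S$, but the paper's version is softer, and it shows that what you single out as the main obstacle --- pinning down the $1/x$ prefactor --- is in fact unnecessary: the prefactor only enters lower-order corrections to $d_n^2$ (of order $\ln\ln n$), so it is not needed for $d$, as you yourself note, nor for $c$, since $c_n\sim S/\bigl((S+1)d_n\bigr)$ follows from the ratio limits \eqref{eq:lem:ap:ou:1:4} alone. Your extra precision is harmless --- it proves more than is required --- but the mean value theorem shortcut delivers the same limits with considerably less machinery.
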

\begin{proof}
	(a) This is a consequence of Lemma \ref{lem:ap:ou:1} and \cite{EmKlMi} Theorem 3.3.26 as well 
	as Proposition 3.3.28.\\
	(b) This follows from \cite{EmKlMi} Theorem 3.3.26 and \cite{EmKlMi} Example 3.3.23.\\
	(c) Relation \eqref{eq:lem:ap:ou:2:2} is, on the one hand, derived from
	\begin{align}
	\label{eq:lem:ap:ou:2:3}
		-\ln \bar{F}(d_n)=\ln n 
	\end{align}
which follows from the definition of $d_n$. On the other hand, according to equation 
\eqref{eq:lem:ap:ou:1:1} and the mean value theorem we have 
	\begin{align} 
	\label{eq:lem:ap:ou:2:4}
	\bar{F}(x)=-\frac{d}{dx}\bigg[\varphi\bigg(\frac{x(s+1)^\hf}{s^\hf}\bigg)\bigg]
	\bigg|_{s=S_x}\cdot \int_0^S \, \frac{(s+2)\, ds}{2s^\hf(s+1)^{\frac32}}\equiv 
	\bar{\F}(x;S)\cdot C
	\end{align}
for some $S_x\in [0,S]$ with $\lim_{x\to\infty}S_x=S$. Relation \eqref{eq:lem:ap:ou:2:1} follows 
now from \eqref{eq:lem:ap:ou:1:1}, \eqref{eq:lem:ap:ou:1:2}, and \eqref{eq:lem:ap:ou:1:4}.
\end{proof}

In Lemma \ref{lem:ap:ou:1} and Lemma \ref{lem:ap:ou:2} we have analyzed the cumulative 
distribution function $F_G(x):=P(G_T^\ast\le x)$, $x\in {\mathbb R}$. Recall that $G^\ast_T= 
\max_{t\in [0,T]}(G(\lambda t)-G(0)e^{-\lambda t})$. The next lemma focuses 
on the impact of $\lambda\ge 1$ on the sequences $c_n\equiv c_n(\lambda)>0$ and $d_n\equiv 
d_n(\lambda)\in \R$, $n\in \N$, defined in Lemma \ref{lem:ap:ou:2}. 
\begin{lemma}
\label{lem:ap:ou:3} We have 
\begin{align} 
    \label{eq:lem:ap:ou:3:-1}
	c_n(\lambda)\ge c_0\big((\ln n)^\hf+(\ln \lambda )^\hf\big)^{-1}
\end{align}
and 
\begin{align} 
    \label{eq:lem:ap:ou:3:0}
	d_n(\lambda)\le d_0\big((\ln n)^\hf+(\ln \lambda )^\hf\big)
\end{align}
for some $c_0>0$ and $d_0>0$ independent of $n\ge 2$ and $\lambda\ge 1$.
\end{lemma}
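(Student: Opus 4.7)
The plan is to derive a uniform tail bound of the form $\bar F(x) \leq K_1\, x\varphi(x)(1+\ln(1+S))$ (with $S = e^{2\lambda T}-1$) valid for $x$ larger than some threshold, invert $\bar F(d_n) = 1/n$ to control $d_n$, and obtain a companion bound $|\bar F'(x)| \leq K_2\, x\, \bar F(x)$ which yields $c_n \geq 1/(K_2 d_n)$. The crucial point is that $\ln(1+S) = 2\lambda T$, so the $\ln\lambda$-dependence in the final bounds emerges only after taking a second logarithm when inverting $\bar F(d_n) = 1/n$.

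For the tail bound, I start from \eqref{eq:lem:ap:ou:2:4}, which writes $\bar F(x) = C\cdot\bar{\F}(x;S_x)$, where $C = \int_0^S \frac{s+2}{2s^{1/2}(s+1)^{3/2}}\,ds$ and $\bar{\F}(x;s) = x\,a(s)^2\,\varphi(xa(s))$ with $a(s) = \sqrt{(s+1)/s}$. The integrand defining $C$ is $O(s^{-1/2})$ near $s=0$ and $O(s^{-1})$ as $s\to\infty$, so $C \leq K_0(1+\ln(1+S))$ for an absolute constant $K_0$. For $x\geq\sqrt 2$, the map $a\mapsto a^2 e^{-x^2 a^2/2}$ is decreasing on $[1,\infty)$; since $a(s)$ is decreasing in $s$ with $a(s)\geq a(S)\geq 1$ (valid for $S\geq 1$, the remaining case being absorbed into constants), the function $s\mapsto\bar{\F}(x;s)$ is increasing on $[0,S]$, so $\bar{\F}(x;S_x)\leq\bar{\F}(x;S)\leq 2x\varphi(x)$. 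Combining yields the claimed tail bound.

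Inserting $x = d_n$ and $\ln(1+S) = 2\lambda T$, taking logarithms, and absorbing the $\ln d_n$ term via the elementary inequality $2\ln d_n \leq d_n^2/2$, I obtain $d_n^2 \leq 4\ln n + 4\ln(1+2\lambda T) + C_1$. Then $\ln(1+2\lambda T) \leq \ln\lambda + C_2$ for $\lambda\geq 1$ (with $C_2$ depending on $T$) and $\sqrt{a+b}\leq\sqrt a+\sqrt b$ give $d_n\leq 2\sqrt{\ln n}+2\sqrt{\ln\lambda}+C_3$; the additive constant is absorbed into the coefficient of $\sqrt{\ln n}$ using $\sqrt{\ln n}\geq\sqrt{\ln 2}$ for $n\geq 2$, yielding $d_n\leq d_0(\sqrt{\ln n}+\sqrt{\ln\lambda})$ as required.

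For $c_n$, from \eqref{eq:lem:ap:ou:1:1}--\eqref{eq:lem:ap:ou:1:2} the hazard rate reads $-\bar F'(x)/\bar F(x) = xR(x) - 1/x$ with $R(x) = \int\psi((s+1)/s)^2\mu\,/\,\int\psi((s+1)/s)\mu$. Splitting the numerator as $\int_0^1 + \int_1^S$: the second piece is $\leq 2\int_1^S\psi((s+1)/s)\mu\leq 2\bar F(x)/x$ (using $(s+1)/s\leq 2$ on $[1,S]$); the first piece is controlled by $\psi(s,x)\leq\varphi(x)e^{-x^2/2}$ on $[0,1]$, giving $O(\varphi(x)e^{-x^2/2}/x^2)$, which is negligible relative to $\bar F(x)/x$ for $x$ sufficiently large (a matching lower bound on $\bar F(x)/x$ coming from the contribution of $s\geq 1$ to the denominator). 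Hence $R(x)\leq 3$ for $x\geq x_0$ uniformly in $\lambda$, so $|\bar F'(x)|\leq 3x\bar F(x)$, and $c_n = (1/n)/|\bar F'(d_n)|\geq 1/(3d_n)\geq c_0/(\sqrt{\ln n}+\sqrt{\ln\lambda})$ via the bound on $d_n$. The principal obstacle is making the bound on $R(x)$ uniform in $S$: the limits \eqref{eq:lem:ap:ou:1:4} are only asymptotic in $x$ for each fixed $S$, so the uniformity in both $x$ and $S$ must be extracted from the explicit localization of the integral mass just described.
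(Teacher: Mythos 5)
Your proposal is correct and takes essentially the same route as the paper's proof: both start from the mean-value representation \eqref{eq:lem:ap:ou:2:4}, exploit that $C(\lambda)$ grows only logarithmically in $S$ (hence linearly in $\lambda$) so that inverting $\bar{F}(d_n)=1/n$ yields $d_n^2\le d_1(\ln n+\ln\lambda)$ and thus \eqref{eq:lem:ap:ou:3:0}, and then obtain \eqref{eq:lem:ap:ou:3:-1} via $c_n=\bar{F}(d_n)/F'(d_n)\ge \mathrm{const}/d_n$, using that the hazard rate $-\bar{F}'/\bar{F}$ is of order $x$, which the paper extracts from \eqref{eq:lem:ap:ou:1:1}, \eqref{eq:lem:ap:ou:1:2}, \eqref{eq:lem:ap:ou:1:4} and you extract from an explicit localization of the integrals. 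The only difference is one of rigor rather than of route: your monotonicity argument for $s\mapsto\bar{\F}(x;s)$ and the split of the integral at $s=1$ make explicit the uniformity in $\lambda$ that the paper asserts by appealing to the asymptotic relations \eqref{eq:lem:ap:ou:1:4} and \eqref{eq:lem:ap:ou:2:2}.
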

\begin{proof}
	According to \eqref{eq:lem:ap:ou:2:4}
it holds for all $\lambda\ge 1$ that $\bar{F}(x)\equiv\bar{\F}(x;S)\cdot C(\lambda)$ where 
\begin{align}
	\label{eq:lem:ap:ou:3:1}
	C(\lambda)= \int_0^S \, \frac{(s+2)\, ds}{2s^\hf(s+1)^{\frac32}}\quad\text{with}\ 
	S\equiv S(\lambda)=e^{2\lambda T}-1.
\end{align}
From \eqref{eq:lem:ap:ou:2:3} we obtain 
	\begin{align} 
	    \label{eq:lem:ap:ou:3:3}
		-\ln (C(\lambda)\cdot \bar{\F}(d_n;S))=\ln n,
	\end{align}
and \eqref{eq:lem:ap:ou:2:4} gives 
	\begin{align*}
		\lim_{x\to\infty}\frac{\ln (\bar{\F}(x;S))}{x^2}=-\frac{S+1}{2S}. 
	\end{align*}
Taking into consideration \eqref{eq:lem:ap:ou:2:2}, using \eqref{eq:lem:ap:ou:3:3} it turns out 
that
	\begin{align*}
		d_n^2 \le d_1(\ln n+\ln C(\lambda))
	\end{align*}
for some $d_1>0$ independent of $n\ge 2$ and $\lambda\ge 1$. Relation \eqref{eq:lem:ap:ou:3:0} 
follows now from \eqref{eq:lem:ap:ou:3:1}. Together with Lemma \ref{lem:ap:ou:2} (b) and 
\eqref{eq:lem:ap:ou:1:1}, \eqref{eq:lem:ap:ou:1:2}, \eqref{eq:lem:ap:ou:1:4} this implies 
\eqref{eq:lem:ap:ou:3:-1}. 
\end{proof}

Now we turn to the main object of interest of this appendix, the estimates of the particular moments of  
$G_{m,T}^\ast:=\max \{G_i(\lambda_it)-G_i(0)e^{-\lambda_it}:t\in [0,T], \ d2^m< i\le d2^{m+1}\}$. 
Below we will use the notation $M^{(n)}\equiv M^{(n)}(\lambda):=\max\{(G_i(\lambda t)-G_i(0)e^{-\lambda t}):t\in [0,T],\ 
1\le i\le n\}$. Furthermore, let $\Gamma^{(k)}(1)$ denote the $k$th derivative of the gamma 
function at $x=1$. 
\begin{lemma} 
	\label{lem:rc:max:ou}
Let $c_n(\lambda)>0$ and $d_n(\lambda)\in \R$ 
be given by Lemma \ref{lem:ap:ou:2} (b) and the 
paragraph before Lemma \ref{lem:ap:ou:3}. \\ 
(a) For $k\in \N$ there is an $n_0\equiv n_{0,k}\in\N$ and a constant $C\equiv C_k>0$, both 
independent of $\lambda\ge 1$, such that 
\begin{align*}
	E\left[\left(M^{(n)}(\lambda)\right)^k\right]\le C\, (d_n(\lambda))^k,\quad n\ge n_0. 
\end{align*}
(b) For each $k_0\in N$ there is a constant $D\equiv D_{k_0}\in \R>0$ independent of $m\in\{1,2,\ldots\}$ 
such that for all $1\le k\le k_0$
\begin{align*}
	E\left[\left(G_{m,T}^\ast\right)^k\right]\le D\left((\ln \lambda_{d2^{m+1}})^{\frac{k}{2}} 
	+m^{\frac{k}{2}}\right).
\end{align*}
\end{lemma}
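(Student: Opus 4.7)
The proof splits naturally into two parts, and I would derive (b) from (a) via a pointwise monotonicity coupling. The plan is thus first to establish the moment bound $E[(M^{(n)})^k]\le C d_n^k$ of (a) by integrating the tail of $M^{(n)}$ and exploiting the Gumbel-type asymptotics from Lemma \ref{lem:ap:ou:2}, and then to reduce $G_{m,T}^\ast$ to $M^{(d2^m)}(\lambda_\ast)$ with $\lambda_\ast:=\lambda_{d2^{m+1}}$ and finish by invoking Lemma \ref{lem:ap:ou:3}.

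For (a), first note that $G_i(\lambda t)-G_i(0)e^{-\lambda t}$ vanishes at $t=0$, so $M^{(n)}\ge 0$, and
\[
  E\bigl[(M^{(n)})^k\bigr]=\int_0^\infty kx^{k-1}(1-F_G^n(x))\,dx\le d_n^k+\int_{d_n}^\infty kx^{k-1}(1-F_G^n(x))\,dx.
\]
On the range $x\ge d_n$ I would use $1-F_G^n(x)\le n\bar{F}_G(x)\le 2n\bar{F}(x)$ (for $x$ beyond a threshold made uniform in $\lambda$ via Lemma \ref{lem:ap:ou:1}) and then change variables via $x=d_n+c_n y$. The key ingredient is that the defining relation $n\bar{F}(d_n)=1$, combined with the asymptotic $\ln\bar{F}(x)\sim -x^2(S+1)/(2S)$ read off from \eqref{eq:lem:ap:ou:1:1}--\eqref{eq:lem:ap:ou:1:4} and with $c_n d_n\to S/(S+1)$ derived at the end of the proof of Lemma \ref{lem:ap:ou:2}, yields a uniform-in-$(n,\lambda)$ bound
\[
  n\bar{F}(d_n+c_n y)\le e^{-c^\ast y},\qquad y\ge 0,
\]
for some $c^\ast>0$ depending only on $T$ through $S\ge e^{2T}-1$. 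Together with $c_n d_n\le C$ this reduces the tail integral to $Cd_n^{k-1}$, and $E[(M^{(n)})^k]\le d_n^k+Cd_n^{k-1}\le 2d_n^k$ for $n\ge n_0$.

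For (b), the key observation is that $\lambda_i\le\lambda_\ast=\lambda_{d2^{m+1}}$ for every index $i$ with $d2^m<i\le d2^{m+1}$, and $\{\lambda_i t:t\in[0,T]\}\subseteq[0,\lambda_\ast T]$, so that almost surely
\[
  \max_{t\in[0,T]}\bigl(G_i(\lambda_i t)-G_i(0)e^{-\lambda_i t}\bigr)\le\max_{t\in[0,T]}\bigl(G_i(\lambda_\ast t)-G_i(0)e^{-\lambda_\ast t}\bigr).
\]
Taking the maximum over the $d2^m$ indices yields $G_{m,T}^\ast\le M^{(d2^m)}(\lambda_\ast)$ on the same probability space. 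For $m$ large enough that $d2^m\ge n_{0,k_0}$, part (a) together with Lemma \ref{lem:ap:ou:3} produces
\[
  E\bigl[(G_{m,T}^\ast)^k\bigr]\le C(d_{d2^m}(\lambda_\ast))^k\le Cd_0^k\bigl(\sqrt{\ln(d2^m)}+\sqrt{\ln\lambda_\ast}\bigr)^k,
\]
and after expanding via $(a+b)^k\le 2^{k-1}(a^k+b^k)$ and using $\ln(d2^m)\le C(1+m)$ the claimed bound $D(m^{k/2}+(\ln\lambda_\ast)^{k/2})$ follows; the finitely many small $m$ with $d2^m<n_{0,k_0}$ are absorbed into $D$. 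The \emph{main obstacle} is securing uniformity in $\lambda$ throughout (a): the only source of $\lambda$-dependence is $S=e^{2\lambda T}-1\ge e^{2T}-1$, and the ratios \eqref{eq:lem:ap:ou:1:4} are bounded by $(S+1)/S\le 1+1/(e^{2T}-1)$, so the tail-equivalence constant and the exponential-decay constant $c^\ast$ can be tracked through \eqref{eq:lem:ap:ou:1:1}--\eqref{eq:lem:ap:ou:1:4} and made independent of $\lambda$.
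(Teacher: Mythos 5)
Your proposal is correct and follows the paper's overall skeleton --- part (a) by integrating the tail of $M^{(n)}$ against the normalization $(c_n(\lambda),d_n(\lambda))$, with uniformity in $\lambda$ extracted from $S(\lambda)=e^{2\lambda T}-1\ge e^{2T}-1$ and \eqref{eq:lem:ap:ou:1:4}, and part (b) by dominating $G_{m,T}^\ast$ by a copy of $M^{(d2^m)}(\lambda_{d2^{m+1}})$ and invoking Lemma \ref{lem:ap:ou:3} --- but the inner mechanism of (a) is different. The paper adapts Resnick's Lemma 2.2(a): it first establishes $-\ln F_G(c_nx+d_n)\sim 1-F_G(c_nx+d_n)$ uniformly, passes to $F$ by tail equivalence, and then uses the von Mises auxiliary-function bound to get the polynomial majorant $n(1-F(c_nx+d_n))\le(1+\varepsilon x)^{-1/\varepsilon}$; you instead use the elementary union bound $1-F_G^n\le n(1-F_G)\le 2n\bar{F}$ and an exponential majorant $n\bar{F}(d_n+c_ny)\le e^{-c^\ast y}$, which is more self-contained and equally sufficient for the moment computation. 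One caution on your justification of that majorant: the ratio asymptotic $\ln\bar{F}(x)\sim -x^2(S+1)/(2S)$ is by itself too weak, since the implicit $o(1)$ error multiplies $x^2$ and can overwhelm the increment $\ln\bar{F}(d_n+c_ny)-\ln\bar{F}(d_n)$ that you need to control. What actually delivers the exponential bound is the hazard-rate estimate obtained from \eqref{eq:lem:ap:ou:1:1} and \eqref{eq:lem:ap:ou:1:2}, namely $F'(x)/\bar{F}(x)\ge x\,(S+1)/S-1/x$ (using $(s+1)/s\ge (S+1)/S$ on the integration domain), combined with a uniform lower bound on $c_nd_n$ coming from \eqref{eq:lem:ap:ou:1:4}; this is exactly the structural information the paper feeds into Resnick's argument, so your citation of \eqref{eq:lem:ap:ou:1:1}--\eqref{eq:lem:ap:ou:1:4} points at the right source even though the stated reasoning should be rerouted through the hazard rate. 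For (b), your pathwise coupling (increasing $\lambda_i$ to $\lambda_{d2^{m+1}}$ enlarges the window $[0,S(\lambda_i)]$ over which $s\mapsto W_i(s)(s+1)^{-1/2}$ is maximized, hence increases the maximum) is precisely the step the paper leaves implicit in its one-line proof, and your bookkeeping --- $d2^m$ i.i.d.\ indices, $(a+b)^k\le 2^{k-1}(a^k+b^k)$, $\ln(d2^m)\le C(1+m)$, absorbing the finitely many small $m$ into $D$ --- is sound.
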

\begin{proof} 
(a) By \eqref{eq:qv:int:1} there is some finite $a\ge 1$ independent of $\lambda\ge 1$ such that 
\begin{align*}
	1-F_G(x)\le a(1-F(x)), \quad x>0,  
\end{align*}
where we mention that 
the functions $F_G\equiv F_{G,\lambda}$ and $F\equiv F_\lambda$ depend on $\lambda$ via $S\equiv 
S(\lambda)=e^{2\lambda T}-1$. Particularly the existence of this $a\ge 1$ can be derived from 
$F_\lambda(x)/F_{G,\lambda}(x)\stack{x\to\infty}{\lra}1$ for all $\lambda\ge 1$ and $F_\lambda(x) 
\stack{\lambda\to\infty}{\lra}\infty$ for all $x>0$. 

From the final conclusion of the proof of Lemma 2.2 (a) in \cite{Resnick} we learn that, in the notation used here, $-\ln F_G(c_n(\lambda)x+d_n(\lambda))\sim 1-F_G(c_n(\lambda)x+d_n(\lambda))$ as $n\to 
\infty$ uniformly for $x>0$ and $\lambda\ge 1$, see Lemma \ref{lem:ap:ou:3}. As in the proof of 
Lemma 2.2 (a) in \cite{Resnick}, for given $\eps>0$, there exist now $n_1\in\N$ independent of 
$\lambda\ge 1$ such that for all $n\ge n_1$, $x>0$, and $\lambda\ge 1$
\begin{align*}
	&1-F^n_G(c_n(\lambda)x+d_n(\lambda))
	\le (1+\eps)n\left(1-F_G(c_n(\lambda)x+d_n(\lambda)) 
	\vphantom{l^1}\right)\\
	&\quad\le a(1+\eps)n\left(1-F(c_n(\lambda)x+d_n(\lambda))\vphantom{l^1} 
	\right). 
\end{align*}

Now we follow and modify the proof of Lemma 2.2 (a) in \cite{Resnick} relative to a cumulative 
distribution function $F$ (no longer $F_G$) that satisfies \eqref{eq:qv:int:1}. For given 
$\eps>0$ there is $n_2\in \N$ independent of $\lambda\ge 1$ such that, now in symbols of 
\cite{Resnick}, $|f'(t)|<\eps$ if $t\ge b_n$ for $n\ge n_2$. In our notation the latter would 
mean $\big|\left(\bar{F}_\lambda(x)/F_\lambda'(x)\right)'\big|<\eps$ if $x\ge d_n(\lambda)$ for 
$\lambda\ge 1$ 
and $n\ge n_1$. This holds because of \eqref{eq:lem:ap:ou:1:1}-\eqref{eq:lem:ap:ou:1:4} 
taking into consideration that the limit \eqref{eq:lem:ap:ou:1:4} is uniform in $\lambda\ge 1$ 
by $S\equiv S(\lambda)=e^{2\lambda T}-1$. Furthermore, we can reduce the degree of $(1+\eps)$ 
by two since, in symbols of \cite{Resnick} but our situation, we have $1-F(b_n)=n^{-1}$ and 
$c(x)=c$, $x>0$. We arrive at 
\begin{align*}
	n\left(1-F(c_n(\lambda)x+d_n(\lambda))\vphantom{l^1}\right)\le (1+\eps x)^{-1/\eps}\quad 
	x>0,\ \lambda\ge 1,\ n\ge n_3
\end{align*} 
for some $n_3\in \N$ independent of $\lambda\ge 1$. Thus for sufficiently small $\eps>0$ there 
is an $n_0\equiv n_{0,k}\in \N$ such that for all $k\in \N$ with $\int_0^\infty kx^{k-1}\left( 
(1+\eps)(1+\eps x)^{-1/\eps}\right)\, dx <\infty$ and all $n\ge n_0$ the following holds. We 
have  
\begin{align*}
	1-F_G^n(c_n(\lambda)x+d_n(\lambda))\le a(1+\varepsilon)(1+\varepsilon x)^{-1/\varepsilon},
	\quad x>0,
\end{align*}
independent of $\lambda\ge 1$. For $n\ge n_0$ we obtain 
\begin{align*} 
	&E\left[\left(M^{(n)}(\lambda)\right)^k\right]=\int_0^\infty kx^{k-1}P\left(M^{(n)} 
	(\lambda)>x \right)\, dx \\ 
	&\quad=\int_{-d_n(\lambda)/c_n(\lambda)}^\infty k\left(c_n(\lambda)x+d_n(\lambda)\vphantom 
	{l^1}\right)^{k-1}P\left(M^{(n)}>c_n(\lambda) x+d_n(\lambda)\right)\cdot c_n(\lambda)\, dx \\ 
	&\quad\le\int_{-d_n/c_n}^0k(c_nx+d_n)^{k-1}\cdot c_n\, dx+\int_0^\infty k(c_nx+d_n)^{k-1} 
	\left(1-F_G^n(c_nx+d_n)\right)\cdot c_n\, dx \\ 
	&\quad\le d_n^k+ a\int_0^\infty k(c_nx+d_n)^{k-1}\left((1+\varepsilon)(1+\varepsilon x)^{-1/ 
	\varepsilon}\right)\cdot c_n\, dx.
\end{align*} 
Part (a) of the lemma follows. \\
(b) This is a consequence of Lemma \ref{lem:ap:ou:3}, and part (a) of the present lemma.
\end{proof}


\bibliography{bibfile}
\bibliographystyle{plain}

\end{document}